\documentclass[11pt]{amsart}
\usepackage[margin=1.15in]{geometry}
\usepackage{amsmath,amssymb,amsthm}
\usepackage[
  colorlinks=true,
  linkcolor=blue,
  citecolor=blue,
  urlcolor=blue,
  pdftitle={Cohomology of amenable traces},
  pdfauthor={Mehdi Moradi},
  pdfsubject={Hochschild cohomology and amenable traces},
  pdfkeywords={amenable trace, Hochschild cohomology, weak expectation, Connes embedding, Christensen-Sinclair cocycle}
]{hyperref}
\usepackage[T1]{fontenc}
\usepackage{lmodern}

\newtheorem{theorem}{Theorem}[section]
\newtheorem{proposition}[theorem]{Proposition}
\newtheorem{corollary}[theorem]{Corollary}
\newtheorem{lemma}[theorem]{Lemma}
\newtheorem*{theoremA}{Theorem A}
\newtheorem*{theoremB}{Theorem B}
\theoremstyle{definition}
\newtheorem{definition}[theorem]{Definition}
\theoremstyle{remark}
\newtheorem{remark}[theorem]{Remark}

\newcommand{\C}{\mathbb C}
\newcommand{\N}{\mathbb N}
\newcommand{\R}{\mathcal R}

\newcommand{\id}{\operatorname{id}}

\newcommand{\tr}{\operatorname{tr}}
\newcommand{\Ad}{\operatorname{Ad}}

\newenvironment{romanenumerate}
  {\begin{enumerate}}
  {\end{enumerate}}

\title[Cohomology of amenable traces]{Cohomology of amenable traces}
\author{Mehdi Moradi}
\address{Department of Mathematics, University of Toronto, Toronto, Ontario, Canada M5S 2E4}
\email{mehdi.moradi@utoronto.ca}
\subjclass[2020]{Primary 46L05, 46L10, 46L55; Secondary 46L07, 46M18}
\keywords{Amenable trace, Hochschild cohomology, weak expectation, Connes embedding, Christensen-Sinclair cocycle}

\begin{document}

\begin{abstract}
We construct a bounded Hochschild one-cocycle which detects amenability of
tracial states on unital C*-algebras.  For an arbitrary trace the construction
uses a faithful representation containing the GNS representation as a direct
summand.  When \(\tau\) is faithful, \(H_\tau=L^2(A,\tau)\), and \(J\) is the
canonical conjugation, the cocycle has the particularly simple form
\[
   \delta_\tau(a)(x)
   =J\pi_\tau(a^*)Jx-xJ\pi_\tau(a^*)J.
\]
It is inner in a natural Banach \(A\)-bimodule if and only if \(\tau\) is
amenable.  We also give the corresponding reformulation of embeddability into
an ultrapower of the hyperfinite \(\mathrm{II}_1\) factor.  Finally, for
faithful traces we study a C*-algebraic variant of the Christensen--Sinclair
two-cocycle and prove, with all admissibility details, that amenability makes
this cocycle a coboundary.
\end{abstract}

\maketitle

\section{Introduction}

Injectivity for von Neumann algebras resembles amenability for groups.  Two
classical results of Connes show that a finite von Neumann algebra
\(M\subseteq B(H)\) is injective if and only if there is an \(M\)-central state
on \(B(H)\) extending its trace, and equivalently if and only if suitable
Hochschild cohomology groups vanish
\cite{Connes76,ConnesCohomology}.  Amenable traces are the C*-algebraic
hypertrace analogue of this central-state condition.  One must nevertheless
distinguish amenability of a trace on \(A\) from amenability of the induced
faithful trace on its GNS image: amenability need not pass to quotients.

The purpose of this note is to give a direct cohomological test which retains
the original algebra and therefore works for nonfaithful traces as well.  Let
\(A\) be a unital C*-algebra, let \(\tau\in T(A)\), and let
\[
   \pi_\tau:A\to B(H_\tau),\qquad H_\tau=L^2(A,\tau),
\]
be the GNS representation.  On the dense subspace coming from \(A\), the
canonical conjugation is given by \(J\widehat a=\widehat{a^*}\).  Choose a
faithful unital representation
\[
   \sigma:A\to B(K_0),
\]
put
\[
   K=H_\tau\oplus K_0,
   \qquad
   \rho=\pi_\tau\oplus\sigma:A\to B(K),
\]
and let \(V:H_\tau\to K\) be the inclusion into the first summand.  Thus
\[
   E:B(K)\to B(H_\tau),
   \qquad
   E(T)=V^*TV,
\]
is the compression onto the GNS summand and satisfies
\[
   E(\rho(a)T\rho(b))
   =\pi_\tau(a)E(T)\pi_\tau(b).
\]
For \(a\in A\), set
\[
   R_a=J\pi_\tau(a^*)J.
\]
Define \(X_{\tau,\rho}\) to be the space of bounded linear maps
\(\varphi:B(K)\to B(H_\tau)\) such that
\[
   \varphi(\rho(a))=0
\]
and
\[
   \varphi(\rho(a)T\rho(b))
   =\pi_\tau(a)\varphi(T)\pi_\tau(b)
\]
for all \(a,b\in A\) and \(T\in B(K)\).  Give it the \(A\)-bimodule
structure
\[
   (a\cdot\varphi\cdot b)(T)=R_b\varphi(T)R_a.
\]
The test cocycle is
\[
   \delta_{\tau,\rho}(a)(T)=R_aE(T)-E(T)R_a.
\]
We verify below that \(X_{\tau,\rho}\) is a Banach \(A\)-bimodule and that
\(\delta_{\tau,\rho}:A\to X_{\tau,\rho}\) is a bounded derivation.

The main result is the following.

\begin{theoremA}
The derivation \(\delta_{\tau,\rho}:A\to X_{\tau,\rho}\) is inner if and
only if \(\tau\) is amenable.
\end{theoremA}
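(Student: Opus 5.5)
The plan is to translate innerness of \(\delta_{\tau,\rho}\) into the existence of a map \(\Psi:B(K)\to\pi_\tau(A)''\) that behaves like a weak expectation. Then I would compare this with the hypertrace characterization of amenable traces: \(\tau\) is amenable if and only if some state \(\omega\) on \(B(K)\) is \(\rho(A)\)-central and satisfies \(\omega\circ\rho=\tau\). Equivalently, some u.c.p.\ map \(\Phi:B(K)\to\pi_\tau(A)''\) has \(\Phi\circ\rho=\pi_\tau\), as in Brown--Ozawa. Throughout, \(M=\pi_\tau(A)''\) and \(\xi=\widehat 1\). Since \(JMJ=M'\), commuting with every \(R_a\) is the same as taking values in \(M\). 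From the bimodule structure, \((a\cdot\varphi-\varphi\cdot a)(T)=\varphi(T)R_a-R_a\varphi(T)\). Hence \(\delta_{\tau,\rho}=\operatorname{ad}\varphi\) for some \(\varphi\in X_{\tau,\rho}\) (up to the sign convention) exactly when \(\Psi=E+\varphi\) satisfies \(R_a\Psi(T)=\Psi(T)R_a\) for all \(a\) and \(T\).

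For the direction where \(\tau\) is amenable, I would take a hypertrace \(\omega\) on \(B(K)\). The first goal is to build a bounded \(A\)-bimodular map \(\Psi:B(K)\to M\) with \(\Psi(\rho(a))=\pi_\tau(a)\). I would define it by \(\langle\Psi(T)\pi_\tau(b)'\xi,\pi_\tau(c)'\xi\rangle=\omega(\rho(c^*)\,T\,\rho(b))\) in the commutant picture. In other words, \(\Psi(T)\) is the Radon--Nikodym element in \(M\) of the functional \(x'\mapsto\omega(\dots)\) on \(M'\). Boundedness and membership in \((M')'=M\) follow from \(|\omega(\rho(c^*)T\rho(b))|\le\|T\|\,\tau(c^*c)^{1/2}\tau(b^*b)^{1/2}\), which is Cauchy--Schwarz plus centrality. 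Bimodularity \(\Psi(\rho(a)T\rho(b))=\pi_\tau(a)\Psi(T)\pi_\tau(b)\) also comes from centrality. I would then set \(\varphi=\Psi-E\). This \(\varphi\) is bimodular, kills \(\rho(A)\), and so lies in \(X_{\tau,\rho}\). Its commutator with \(R_a\) equals \(-(R_aE-ER_a)\), so \(\delta_{\tau,\rho}\) is inner.

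For the converse, assume \(\delta_{\tau,\rho}\) is inner, and let \(\Psi=E+\varphi\) be the resulting bounded bimodular map with values in \(M\) and \(\Psi\circ\rho=\pi_\tau\). Put \(\omega(T)=\langle\Psi(T)\xi,\xi\rangle\). Then \(\omega(\rho(a)T)=\operatorname{tr}(\pi_\tau(a)\Psi(T))=\operatorname{tr}(\Psi(T)\pi_\tau(a))=\omega(T\rho(a))\), using that \(\xi\) is a trace vector for \(M\). Also \(\omega\circ\rho=\tau\). So \(\omega\) is a bounded \(\rho(A)\)-central functional extending \(\tau\), though a priori not positive.

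The main obstacle is producing positivity. First, I would replace \(\omega\) by its hermitian part \(\tfrac12(\omega+\omega^*)\), which is still central and still extends \(\tau\) since \(\tau\) is hermitian. Next, I would use uniqueness of the Jordan decomposition \(\omega=\omega_+-\omega_-\). Invariance under \(\operatorname{Ad}\rho(u)\) for unitaries \(u\) then forces \(\omega_\pm\) to be \(\operatorname{Ad}\rho(u)\)-invariant, hence \(\rho(A)\)-central. This yields a positive central functional \(\omega_+\) with \(\omega_+\circ\rho\ge\tau\). To get an exact extension of \(\tau\), I would apply a Radon--Nikodym step. From \(\tau\le\theta:=\omega_+\circ\rho\) we get \(\tau=\theta(h\,\cdot\,)\) with \(0\le h\le1\) in the center of \(\pi_\theta(A)''\). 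I would approximate \(h\) by elements \(\sum_i\pi_\theta(a_i)^*\pi_\theta(a_i)\) and use centrality to transport the density onto \(B(K)\), defining \(\omega'(T)=\lim\sum_i\omega_+(\rho(a_i)^*T\rho(a_i))\) along an ultrafilter. This \(\omega'\) is a positive central functional with \(\omega'\circ\rho=\tau\), i.e.\ a hypertrace. If this step proves too delicate, the fallback is to verify the norm criterion \(|\sum\tau(a_ib_i)|\le\|\sum a_i\otimes b_i^{op}\|_{\min}\) directly from \(\Psi\), using that \(\Psi\) restricted to \(C^*(\rho(A),J\pi_\tau(A)J)\)-type elements is controlled by the binormal structure.
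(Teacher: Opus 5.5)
Your argument is correct and follows the paper's skeleton. Innerness is equivalent to the existence of a bounded mixed-bimodular map \(\Psi=E+\varphi\) with values in \((J\pi_\tau(A)J)'=\pi_\tau(A)''\) and \(\Psi\circ\rho=\pi_\tau\). The converse then runs through \(\widetilde\tau\circ\Psi\), its hermitian part, and the Jordan decomposition, exactly as in Proposition~\ref{prop:bounded}. The differences are that you prove by hand two facts the paper takes from Brown.

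\emph{Forward direction.} The paper takes the u.c.p.\ map of Theorem~\ref{thm:brown}(iii) and applies the multiplicative-domain theorem. You instead build \(\Psi\) from a hypertrace via the form \((\widehat b,\widehat c)\mapsto\omega(\rho(c^*)T\rho(b))\). This works, with two small corrections to your attributions. The bound needs only positivity and Cauchy--Schwarz, and bimodularity is automatic from the formula. Centrality is what makes \(\Psi(T)\) commute with the \(R_d\), hence lie in \(\pi_\tau(A)''\).

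\emph{Converse.} The paper normalizes \(f_+\), writes \(\Omega\circ\rho=c^{-1}\tau+(1-c^{-1})\eta\), and invokes the fact that amenable traces form a face of \(T(A)\). Your Radon--Nikodym transport is in effect a direct proof of that face property. Your proof is therefore more self-contained, at the cost of an approximation argument.

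That step needs one correction. By centrality, \(\omega_+(\rho(a_i)^*T\rho(a_i))=\omega_+(T\rho(a_ia_i^*))\). So your formula requires \(\sum_i\pi_\theta(a_ia_i^*)\to h\), not \(\sum_i\pi_\theta(a_i^*a_i)\to h\). For sums these are not interchangeable: already \(e_{11},e_{12}\in M_2\) give \(\sum x_i^*x_i=1\) but \(\sum x_ix_i^*=2e_{11}\).

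The cleanest fix is as follows. Take positive \(x_\lambda\in A\) of bounded norm with \(\pi_\theta(x_\lambda)\to h\) strongly; this uses Kaplansky density plus lifting of positive elements. Then put
\[
  \omega'(T)=\lim_\lambda\omega_+\bigl(\rho(x_\lambda)^{1/2}T\rho(x_\lambda)^{1/2}\bigr)=\lim_\lambda\omega_+(T\rho(x_\lambda))
\]
along an ultrafilter. This \(\omega'\) is positive, and \(\omega'(\rho(b))=\lim_\lambda\theta(bx_\lambda)=\tau(b)\). Centrality follows, with \(\xi_\theta\) the GNS vector of \(\theta\), from
\[
  |\omega_+(T\rho([x_\lambda,b]))|
  \le\omega_+(TT^*)^{1/2}\,
  \|[\pi_\theta(x_\lambda),\pi_\theta(b)]\xi_\theta\|\longrightarrow0 .
\]
This estimate uses strong, not merely weak, convergence together with the centrality of \(h\). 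With this adjustment, \(\omega'\) is a hypertrace for \(\tau\), and Theorem~\ref{thm:brown} finishes the proof. Your fallback via the norm criterion is not needed.
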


The result is independent of the auxiliary faithful representation, since
innerness is equivalent to amenability.  If \(\tau\) is faithful, we may take
\(K=H_\tau\), \(\rho=\pi_\tau\), and \(E=\id\); this gives exactly the
GNS-only formula displayed in the abstract.  Theorem \ref{thm:main} and
Corollary \ref{cor:faithful-main} give the numbered statements.  As an
application, we obtain a cohomological reformulation of embeddability into an
ultrapower of the hyperfinite \(\mathrm{II}_1\) factor.

\begin{theoremB}
Let \(M\) be a \(\mathrm{II}_1\) factor with separable predual and faithful
normal trace \(\tau\).  Then \(M\) embeds trace-preservingly into
\(\R^\omega\) if and only if there is a separable unital ultraweakly dense
C*-subalgebra \(A\subseteq M\) such that the faithful-trace test cocycle
\(\delta_{\tau|_A}\) is inner.
\end{theoremB}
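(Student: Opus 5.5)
The plan is to reduce Theorem B to a statement purely about amenability of traces, and then use the classical description of Connes embeddability through amenable traces. Since \(\tau\) is a faithful normal trace on \(M\), its restriction \(\tau|_A\) to any unital C*-subalgebra \(A\subseteq M\) is faithful. By Corollary \ref{cor:faithful-main} (the faithful-trace form of Theorem A, with \(K=H_\tau\), \(\rho=\pi_\tau\), \(E=\id\)), \(\delta_{\tau|_A}\) is inner if and only if \(\tau|_A\) is amenable. So it suffices to prove: \(M\) embeds trace-preservingly into \(\R^\omega\) if and only if some separable unital ultraweakly dense C*-subalgebra \(A\subseteq M\) has \(\tau|_A\) amenable. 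This is essentially Kirchberg's characterization (see Brown--Ozawa, Ch.~6). I would either cite it or reprove it along the following lines.

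\emph{(\(\Leftarrow\))} Suppose \(\tau|_A\) is amenable. Kirchberg's theorem gives ucp maps \(\varphi_n:A\to M_{k(n)}\) with \(\|\varphi_n(ab)-\varphi_n(a)\varphi_n(b)\|_2\to0\) and \(\tr\circ\varphi_n\to\tau\) pointwise; separability of \(A\) lets us use a sequence. Passing to \(\prod_\omega M_{k(n)}\subseteq\R^\omega\) yields a trace-preserving unital \(*\)-homomorphism \(\theta:A\to\R^\omega\). Because \(\theta\) preserves the faithful normal traces, it is isometric in \(\|\cdot\|_2\). Hence it extends to the \(\|\cdot\|_2\)-closure of the unit ball, that is, to \(\pi_\tau(A)''=M\) by Kaplansky density and ultraweak density of \(A\). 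The extension is a normal trace-preserving embedding \(M\hookrightarrow\R^\omega\).

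\emph{(\(\Rightarrow\))} This is the step I expect to be the main obstacle. The danger is exactly the phenomenon stressed in the introduction: amenability does not pass to quotients. The natural algebra generated by a countable dense set in \(M\) is a quotient of a free group C*-algebra, so its trace need not be amenable. Amenability does pass to C*-subalgebras, since an \(A\)-central state extending \(\tau\) restricts appropriately, so the embedding must be exploited on a well-chosen \(A\).

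My first attempt would use the local liftability criterion. An amenable trace is the same as a trace-preserving homomorphism \(A\to\R^\omega\) that lifts locally to ucp maps into \(\prod M_{k(n)}\). I would therefore choose \(A\) so that liftability is automatic, for instance by arranging \(A\) to have the local lifting property, or by using Choi--Effros/Kirchberg lifting for \(C^*(\mathbb F_\infty)\). The generating unitaries of \(M\) lift to unitaries in matrix algebras. The trace pulled back to \(C^*(\mathbb F_\infty)\) is then a pointwise limit of finite-dimensional traces, hence amenable, since amenable traces form a weak\(^*\)-closed set containing the finite-dimensional ones.

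The delicate point is to transfer this to a subalgebra \(A\) of \(M\) itself. I would do this by citing the corresponding statement in Brown--Ozawa: for \(M\) with separable predual, \(M\hookrightarrow\R^\omega\) if and only if the trace on some weakly dense C*-subalgebra is amenable. If a self-contained argument is needed, I would construct \(A\) inductively together with ucp lifts of the embedding on finite-dimensional operator subsystems, closing up under countably many steps to keep \(A\) separable and dense.
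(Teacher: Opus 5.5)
Your reduction through Corollary~\ref{cor:faithful-main} is correct. So is your argument that amenability of \(\tau|_A\) gives \(M\hookrightarrow\R^\omega\); it follows the paper's implications (iv)\(\Rightarrow\)(ii)\(\Rightarrow\)(i) in Theorem~\ref{thm:embeddable}.

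The forward implication (embedding \(\Rightarrow\) some dense \(A\) with \(\tau|_A\) amenable) is not proved. Your lifting argument gives a unital \(*\)-homomorphism \(\rho:C^*(\mathbb F_\infty)\to M\), sending generators to chosen unitaries of \(M\), with \(\tau\circ\rho\) amenable. For such a \(\rho\), however, \(A=\rho(C^*(\mathbb F_\infty))\) is only the quotient \(C^*(\mathbb F_\infty)/\ker\rho\), and the matrix approximants of \(\tau\circ\rho\) need not factor through it. This is exactly the quotient obstruction you point out, and it stays unresolved.

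Appealing to ``the corresponding statement in Brown--Ozawa'' amounts to citing the theorem itself after the Corollary reduction. The inductive sketch does not fill the hole either. By the criterion you quote, u.c.p.\ liftability of the embedding on all finite-dimensional operator subsystems of \(A\) is equivalent to amenability of \(\tau|_A\). A countable closing-up procedure gives no mechanism that guarantees this, because every enlargement of \(A\) adds new subsystems, and new relations among the generators, that you do not control.

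The missing idea is the one the paper uses: an \emph{injective} unital \(*\)-homomorphism \(\rho:C^*(\mathbb F_\infty)\to M\) with ultraweakly dense range. This is supplied by Brown's universal representation result \cite[Proposition~5.1.1]{Brown06}. Then \(A=\rho(C^*(\mathbb F_\infty))\) is a separable, unital, ultraweakly dense C*-subalgebra isomorphic to \(C^*(\mathbb F_\infty)\), so it has the local lifting property, which is what you wanted to arrange.

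The GNS von Neumann algebra of \(\tau\circ\rho\) is \(M\). The embedding therefore makes \(\tau\circ\rho\) hyperlinear, and the LLP upgrades this to amenability \cite[Proposition~6.3.4]{Brown06}. Your unitary-lifting argument would also finish at this point, since the finite-dimensional approximants transfer through the \(*\)-isomorphism \(\rho:C^*(\mathbb F_\infty)\to A\). Without a faithful dense-range copy of \(C^*(\mathbb F_\infty)\) inside \(M\), or some other way of producing a dense subalgebra with the LLP, the forward implication remains open in your proposal.
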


The precise form appears as Theorem \ref{thm:embeddable}.  We also include a
C*-algebraic variant of the Christensen--Sinclair two-cocycle associated to a
faithful trace.  The implication from amenability to coboundary follows from
Brown's weak-expectation criterion.  The original von Neumann algebra converse
does not automatically transfer to this larger C*-algebraic coefficient
module; Remark \ref{rem:cs-converse} identifies the obstruction precisely.

\section{Preliminaries on Hochschild cohomology}

Let \(A\) be a C*-algebra and let \(X\) be a Banach \(A\)-bimodule. We write \(L^n(A,X)\) for the Banach space of continuous \(n\)-linear maps \(A^n\to X\). The Hochschild coboundary map
\[
   \Delta^n:L^n(A,X)\to L^{n+1}(A,X)
\]
is given by
\begin{align*}
\Delta^n\varphi(a_1,\ldots,a_{n+1})
 &=a_1\varphi(a_2,\ldots,a_{n+1}) \\
 &\quad +\sum_{j=1}^n(-1)^j\varphi(a_1,\ldots,a_{j-1},a_ja_{j+1},a_{j+2},\ldots,a_{n+1}) \\
 &\quad +(-1)^{n+1}\varphi(a_1,\ldots,a_n)a_{n+1}.
\end{align*}
We write
\[
   Z^n(A,X)=\ker\Delta^n,
   \qquad B^n(A,X)=\operatorname{im}\Delta^{n-1},
\]
and
\[
   H^n(A,X)=Z^n(A,X)/B^n(A,X).
\]
For \(x\in X\), the zero-dimensional coboundary is
\[
   (\Delta^0x)(a)=a\cdot x-x\cdot a.
\]
Thus a derivation is \emph{inner} precisely when it belongs to \(B^1(A,X)\).
With the alternative commutator convention
\(\operatorname{ad}(x)(a)=x\cdot a-a\cdot x\), one has
\(\Delta^0x=\operatorname{ad}(-x)\); the two sign conventions therefore give
the same space of inner derivations.

We shall use the following elementary exact sequence.

\begin{proposition}\label{prop:exact}
Let \(A\subseteq E\) be a unital inclusion of C*-algebras, and let
\[
   0\longrightarrow I\longrightarrow E\xrightarrow q B\longrightarrow 0
\]
be a short exact sequence. Then there is an exact sequence
\[
0\longrightarrow q(A'\cap E)\longrightarrow q(A)'\cap B
   \xrightarrow{\partial} H^1(A,I)\longrightarrow H^1(A,E)\longrightarrow H^1(A,B).
\]
\end{proposition}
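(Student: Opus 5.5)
This is the long exact sequence in Hochschild cohomology attached to the short exact sequence of $A$-bimodules $0\to I\to E\to B\to 0$, truncated at $H^1(A,B)$. The bimodule structures are the obvious ones. $A$ acts on $E$ by left and right multiplication through the inclusion $A\subseteq E$. On $I$ it acts by restriction, which is legitimate because $I$ is a closed two-sided ideal. On $B$ it acts through $q$. The inclusion $\iota:I\to E$ and the quotient map $q:E\to B$ are then bounded $A$-bimodule maps, and $q$ is open with a bounded linear lift on cosets (norm lift up to $1+\varepsilon$). I will not rely on a continuous linear section, since none need exist; I only need the exactness that can be checked elementwise. The zeroth terms are identified as
\[
H^0(A,E)=A'\cap E,\qquad H^0(A,B)=q(A)'\cap B,
\]
where $H^0$ means the bimodule centralizer. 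The first map is $q$ restricted to $A'\cap E$, and it clearly lands in $q(A)'\cap B$.

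Next I define the connecting map. Given $b\in q(A)'\cap B$, choose $e\in E$ with $q(e)=b$ and set $\partial b=[\Delta^0 e]$, so that $(\Delta^0e)(a)=ae-ea$. Since $q(ae-ea)=q(a)b-bq(a)=0$, this derivation takes values in $I$. It is a bounded derivation into $I$, and it is inner in $E$. Changing the lift $e$ by an element $i\in I$ changes it by $\Delta^0 i\in B^1(A,I)$, so $\partial$ is well defined and linear.

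Exactness is then checked at each spot in turn.

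At $q(A)'\cap B$: if $b=q(e)$ with $e\in A'\cap E$, then we may use $e$ as the lift, so $\Delta^0e=0$ and $\partial b=0$. Conversely, suppose $\partial b=0$, so that $\Delta^0e=\Delta^0 i$ for some $i\in I$. Then $e-i\in A'\cap E$ and $q(e-i)=b$.

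At $H^1(A,I)$: the composite of $\partial$ with the map $H^1(A,I)\to H^1(A,E)$ is zero, since $\Delta^0 e$ is inner in $E$. Conversely, suppose $D\in Z^1(A,I)$ becomes inner in $E$, say $D=\Delta^0 e$. Then $q(e)$ commutes with $q(A)$, because $q(ae-ea)=q(D(a))=0$, and $\partial(q(e))=[D]$.

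At $H^1(A,E)$: if $D$ takes values in $I$, then $q\circ D=0$. Conversely, suppose $q\circ D=\Delta^0 b$ for some $b\in B$. Lift $b$ to some $e\in E$. Then $D-\Delta^0e$ is a bounded derivation with values in $\ker q=I$, so $[D]$ lies in the image of $H^1(A,I)$.

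Exactness at the left end, i.e. injectivity of $q(A'\cap E)\to q(A)'\cap B$, is trivial, since the map is an inclusion of subsets of $B$.

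No step uses more than boundedness and linearity. The only point needing care is well-definedness of $\partial$ and the boundedness of the derivations involved. This is automatic, because $\Delta^0 x$ is bounded for every $x$, and any bounded derivation into $E$ composed with $q$ stays bounded. I therefore expect no real obstacle. The main thing to be careful about is that the identification of $H^0(A,B)$ with $q(A)'\cap B$ uses the bimodule action through $q$, and that the lifting arguments never need a continuous section.
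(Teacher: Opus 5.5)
Your proposal is correct and follows the paper's proof essentially step for step. You use the same connecting map, built from a lift $e$ of $b$, and the same elementwise lifting arguments at $q(A)'\cap B$, $H^1(A,I)$ and $H^1(A,E)$. Your $\Delta^0 e$ differs from the paper's $a\mapsto[e,a]$ only by a sign, which does not affect exactness.
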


\begin{proof}
Regard \(I\), \(E\), and \(B\) as \(A\)-bimodules in the natural way,
using \(q|_A\) for the action on \(B\).  The maps between the cohomology
groups are induced by \(I\hookrightarrow E\) and by \(q\), respectively.

Let \(b\in q(A)'\cap B\), and choose \(e\in E\) with \(q(e)=b\).  Since
\(b\) commutes with \(q(A)\),
\[
   q([e,a])=[q(e),q(a)]=0,
\]
so \(D_e(a)=[e,a]\) is an \(I\)-valued derivation.  Define
\[
   \partial(b)=[D_e]\in H^1(A,I).
\]
If \(e'\) is another lift, then \(e-e'\in I\) and \(D_e-D_{e'}\) is
inner as an \(I\)-valued derivation.  Thus \(\partial\) is well-defined.

The first arrow is the inclusion
\(q(A'\cap E)\subseteq q(A)'\cap B\).  If \(b=q(e)\) with
\(e\in A'\cap E\), then \(D_e=0\).  Conversely, if
\(\partial(b)=0\), there is \(i\in I\) such that
\([e,a]=[i,a]\) for every \(a\in A\).  Hence \(e-i\in A'\cap E\) and
\(b=q(e-i)\).  This proves exactness at \(q(A)'\cap B\).

If \(b=q(e)\in q(A)'\cap B\), then \(D_e\), regarded as an \(E\)-valued
derivation, is inner.  Conversely, suppose that \(D:A\to I\) becomes inner
as an \(E\)-valued derivation.  Thus \(D(a)=[e,a]\) for some \(e\in E\).
Because \(D(A)\subseteq I\), the element \(q(e)\) commutes with \(q(A)\),
and \([D]=\partial(q(e))\).  This proves exactness at \(H^1(A,I)\).

Finally, the image of \(H^1(A,I)\) is contained in the kernel of
\(H^1(A,E)\to H^1(A,B)\).  Conversely, let \(D:A\to E\) be a derivation
such that \(q\circ D\) is inner.  Choose \(b\in B\) with
\[
   q(D(a))=[b,q(a)]
\]
and lift \(b\) to \(e\in E\).  Then
\[
   D_0(a)=D(a)-[e,a]
\]
is an \(I\)-valued derivation, and \(D\) and \(D_0\) determine the same
class in \(H^1(A,E)\).  This proves exactness at \(H^1(A,E)\).
\end{proof}

\begin{corollary}\label{cor:calkin}
Let \(A\subseteq B(H)\) be a unital C*-algebra whose given representation
has a cyclic vector, and let
\[
   q:B(H)\to \mathcal Q(H):=B(H)/K(H)
\]
be the quotient map. Then
\[
0\longrightarrow q(A')\longrightarrow q(A)'\cap \mathcal Q(H)
   \xrightarrow{\partial}H^1(A,K(H))\longrightarrow 0
\]
is exact, where \(\partial(q(T))=[a\mapsto[T,a]]\).
\end{corollary}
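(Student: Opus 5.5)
The plan is to apply Proposition~\ref{prop:exact} with \(E=B(H)\), \(I=K(H)\), \(B=\mathcal Q(H)\), and \(q\) the Calkin quotient map. The given inclusion \(A\subseteq B(H)\) is unital, so the hypotheses of the proposition hold. It yields the exact sequence
\[
0\longrightarrow q(A'\cap B(H))\longrightarrow q(A)'\cap\mathcal Q(H)\xrightarrow{\partial}H^1(A,K(H))\longrightarrow H^1(A,B(H)).
\]
Since \(A'\cap B(H)=A'\), the first term is \(q(A')\). The connecting map constructed in the proof of Proposition~\ref{prop:exact} sends \(q(T)\) to the class of \(a\mapsto[T,a]\), for any lift \(T\). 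This is exactly the stated formula for \(\partial\).

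It remains to show that \(\partial\) is surjective. By exactness at \(H^1(A,K(H))\), surjectivity of \(\partial\) is equivalent to the map \(H^1(A,K(H))\to H^1(A,B(H))\) being zero. I will prove the stronger statement \(H^1(A,B(H))=0\): every bounded derivation \(D:A\to B(H)\) is inner. Then every \(K(H)\)-valued derivation becomes inner in \(B(H)\), hence lies in the image of \(\partial\).

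This vanishing is the main obstacle, and it is exactly where the cyclic vector hypothesis enters. For general \(A\subseteq B(H)\), the vanishing of \(H^1(A,B(H))\) is Kadison's derivation problem, which is equivalent to the similarity problem and is not known. The approach I would try first is to invoke Christensen's theorem on extending derivations. It states that if \(A\subseteq B(H)\) has a cyclic vector, then every derivation \(D:A\to B(H)\) is inner, i.e.\ of the form \(D(a)=[T,a]\) for some \(T\in B(H)\). I would cite this result rather than reprove it.

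If a self-contained argument is preferred, I would outline the standard route. First extend \(D\) to an ultraweakly continuous derivation on \(A''\) into \(B(H)\); Ringrose's automatic continuity theorem together with Christensen's normality results make this possible. Then use the cyclic vector to show that the associated completely bounded map has bounded completely bounded norm, and apply the Christensen--Effros--Sinclair representation to obtain an implementing operator \(T\). Boundedness of derivations into \(K(H)\) or \(B(H)\) is automatic by Ringrose, so all cocycles in sight lie in \(L^1\). With \(H^1(A,B(H))=0\) established, exactness yields the short exact sequence claimed in Corollary~\ref{cor:calkin}.
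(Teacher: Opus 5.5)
Your proposal is correct and follows the paper's proof: apply Proposition~\ref{prop:exact} to the Calkin extension $0\to K(H)\to B(H)\to\mathcal Q(H)\to 0$, then obtain surjectivity of $\partial$ from Christensen's theorem that $H^1(A,B(H))=0$ when $A$ has a cyclic vector. The optional self-contained sketch in your last paragraph is only heuristic, since the real content is proving that the derivation itself is completely bounded, but it is not needed because you cite Christensen's result exactly as the paper does.
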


\begin{proof}
Apply Proposition \ref{prop:exact} to
\[
   0\longrightarrow K(H)\longrightarrow B(H)\longrightarrow \mathcal Q(H)\longrightarrow 0.
\]
Christensen proved that every bounded derivation \(A\to B(H)\) is spatial
when the given representation has a cyclic vector; equivalently,
\(H^1(A,B(H))=0\) \cite[Corollary~5.4(b)]{Christensen82}.  The asserted
short exact sequence follows.
\end{proof}

In particular, if \(D:A\to K(H)\) has the form \(D(a)=[T,a]\) with
\(T\in B(H)\), then \(q(T)\in q(A)'\cap\mathcal Q(H)\), and \(D\) has a
compact implementer if and only if \(q(T)\in q(A')\).

\section{Approximation properties of traces}

We recall the trace approximation properties used below.  Nets may be replaced
by sequences when \(A\) is separable.

\begin{definition}\label{def:traceclasses}
Let \(A\) be a unital C*-algebra and let \(\tau\in T(A)\).
\begin{romanenumerate}
\item \(\tau\) is \emph{hyperlinear} if there is a trace-preserving embedding
\[
   \pi_\tau(A)''\hookrightarrow \R^\omega
\]
for some free ultrafilter \(\omega\), where \(\R\) is the hyperfinite
\(\mathrm{II}_1\) factor.
\item \(\tau\) is \emph{amenable} if there is a net of u.c.p. maps
\[
   \varphi_i:A\to M_{n(i)}
\]
such that, for all \(a,b\in A\),
\[
   \tr_{n(i)}(\varphi_i(a))\to \tau(a),
   \qquad
   \|\varphi_i(ab)-\varphi_i(a)\varphi_i(b)\|_{2,\tr_{n(i)}}\to 0.
\]
\item \(\tau\) is \emph{quasidiagonal} if the maps in (ii) can be chosen so that
\[
   \|\varphi_i(ab)-\varphi_i(a)\varphi_i(b)\|\to 0
\]
in operator norm for all \(a,b\in A\).
\item \(\tau\) is \emph{locally finite-dimensional}, or LFD, if there is a
net of u.c.p. maps
\[
   \varphi_i:A\to M_{n(i)}
\]
such that \(\tr_{n(i)}(\varphi_i(a))\to \tau(a)\) for all \(a\in A\), and
\[
   d(a,\mathcal M_{\varphi_i})\to 0
\]
for all \(a\in A\), where \(\mathcal M_{\varphi_i}\) is the multiplicative domain of \(\varphi_i\).
\item \(\tau\) is \emph{residually finite-dimensional}, or RFD, if it is a
weak-* limit of traces of the form \(\tr_d\circ\theta\), where
\(\theta:A\to M_d\) is a unital finite-dimensional representation.
\end{romanenumerate}
\end{definition}

\begin{remark}
On \(C^*(\mathbb F_n)\), where \(n=1,2,\ldots,\infty\), all five notions in
Definition \ref{def:traceclasses} coincide
\cite[Propositions~4.1.14 and~6.3.4]{Brown06}.  On the full group
C*-algebra of a discrete property~(T) group, amenable, quasidiagonal, LFD,
and RFD traces coincide
\cite[Lemma~4.1.11 and Proposition~4.1.12]{Brown06}.
\end{remark}

The faithful-representation clause in the next theorem is essential.  The GNS
representation itself cannot be used here when the trace is nonfaithful.

\begin{theorem}[Brown]\label{thm:brown}
Let \(A\) be a unital C*-algebra, let \(\tau\in T(A)\), and let
\(\rho:A\to B(K)\) be a faithful unital representation.  The following are
equivalent \cite[Theorem~3.1.6]{Brown06}.
\begin{romanenumerate}
\item \(\tau\) is amenable.
\item There is a state \(\Omega\) on \(B(K)\) such that
\[
   \Omega(\rho(a))=\tau(a),
   \qquad
   \Omega(\rho(a)T)=\Omega(T\rho(a))
\]
for all \(a\in A\) and \(T\in B(K)\).
\item There is a u.c.p. map
\[
   u:B(K)\to \pi_\tau(A)''
\]
such that \(u(\rho(a))=\pi_\tau(a)\) for all \(a\in A\).
\end{romanenumerate}
\end{theorem}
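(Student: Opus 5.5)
The statement is Brown's theorem, cited from \cite[Theorem~3.1.6]{Brown06}; I would prove it by the cycle (i)\(\Rightarrow\)(ii)\(\Rightarrow\)(iii)\(\Rightarrow\)(i).

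\emph{(i)\(\Rightarrow\)(ii).} Let \(\varphi_i:A\to M_{n(i)}\) be as in Definition~\ref{def:traceclasses}(ii). Since \(\rho\) is faithful, Arveson's extension theorem extends \(\varphi_i\circ\rho^{-1}\) from \(\rho(A)\) to a u.c.p. map \(\Phi_i:B(K)\to M_{n(i)}\). Put \(\Omega_i=\tr_{n(i)}\circ\Phi_i\) and let \(\Omega\) be a weak-* cluster point. Then \(\Omega(\rho(a))=\tau(a)\) is immediate. For centrality, take a unitary \(u\in A\). Asymptotic multiplicativity in \(\|\cdot\|_2\) gives \(\|\varphi_i(u)^*\varphi_i(u)-1\|_2\to 0\), so \(\rho(u)\) lies asymptotically in the multiplicative domain of \(\Phi_i\). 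The Schwarz-inequality estimate
\[
|\Phi_i(\rho(u)T)-\Phi_i(\rho(u))\Phi_i(T)|_{2}\le \|T\|\,\|\Phi_i(\rho(u)\rho(u)^*)-\Phi_i(\rho(u))\Phi_i(\rho(u))^*\|_1^{1/2}
\]
then yields \(\Omega_i(\rho(u)T)-\tr(\Phi_i(\rho(u))\Phi_i(T))\to 0\), and symmetrically for \(T\rho(u)\). Traciality of \(\tr_{n(i)}\) gives \(\Omega(\rho(u)T)=\Omega(T\rho(u))\), and since unitaries span \(A\), (ii) follows.

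\emph{(ii)\(\Rightarrow\)(iii).} Use the standard GNS/Radon--Nikodym construction. For \(T\in B(K)\), the sesquilinear form \((\widehat a,\widehat b)\mapsto\Omega(\rho(b)^*T\rho(a))\) is bounded by \(\|T\|\,\|\widehat a\|_2\|\widehat b\|_2\), by Cauchy--Schwarz and \(\Omega(\rho(a^*a))=\tau(a^*a)\). It therefore defines an operator \(u(T)\in B(H_\tau)\), and \(u\) is u.c.p. with \(u(\rho(a))=\pi_\tau(a)\). Centrality of \(\Omega\) gives \(\Omega(\rho(b)^*T\rho(a)\rho(c))=\Omega(\rho(c)\rho(b)^*T\rho(a))\), so \(u(T)\) commutes with the right action \(J\pi_\tau(A)J\). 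Hence \(u(T)\in (J\pi_\tau(A)J)'=\pi_\tau(A)''\).

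\emph{(iii)\(\Rightarrow\)(i).} Compose \(u\) with the faithful normal trace \(\bar\tau\) on \(\pi_\tau(A)''\) to get a state \(\Omega=\bar\tau\circ u\) on \(B(K)\) extending \(\tau\circ\rho^{-1}\). Since \(\rho(A)\) lies in the multiplicative domain of \(u\), \(\Omega\) is \(\rho(A)\)-central, so we are back at (ii). From a central state one produces finite-dimensional approximations in Connes' manner: approximate \(\Omega\) weak-* by normal states \(\mathrm{Tr}(h\,\cdot)\) with \(h\ge0\) of finite rank; by Hahn--Banach and convexity, arrange \(\|u h u^*-h\|_1\to0\) for unitaries \(u\) in \(\rho(A)\); apply the Powers--Størmer inequality to pass to spectral projections \(P\) of \(h\); and set \(\varphi(a)=P\rho(a)P\) compressed to \(PK\cong\C^n\).

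\emph{Main obstacle.} The hardest step is the last one: converting the \(\|\cdot\|_1\) almost-invariance of \(h\) into \(\|\cdot\|_2\) asymptotic multiplicativity and trace convergence of the compressions. I would follow Connes' argument, slicing \(h\) by level sets via the Powers--Størmer inequality and Namioka's trick and averaging, exactly as in Brown's memoir.
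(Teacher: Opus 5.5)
The paper does not prove this statement; it quotes Brown's memoir, and your overall route is the standard one found there. Three of your implications are correct:
\begin{itemize}
\item (i)$\Rightarrow$(ii): Arveson extension through $\rho^{-1}$, the Stinespring estimate $\|\Phi(xy)-\Phi(x)\Phi(y)\|_2\le\|y\|\,\tr\bigl(\Phi(xx^*)-\Phi(x)\Phi(x)^*\bigr)^{1/2}$, and a weak-* cluster point.
\item (ii)$\Rightarrow$(iii): the Radon--Nikodym construction, where centrality yields commutation with $J\pi_\tau(A)J$.
\item (iii)$\Rightarrow$(ii): essentially the argument the paper itself gives for (iii)$\Rightarrow$(ii) in Proposition~\ref{prop:bounded}.
\end{itemize}

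The gap is in (ii)$\Rightarrow$(i), at the step where you compress by a spectral projection $P$ of $h$ and set $\varphi(a)=P\rho(a)P$. Powers--St\o{}rmer together with Connes' level-set inequality controls $\int_0^\infty\|[\rho(u),E_t]\|_{\mathrm{HS}}^2\,dt$ for $E_t=\chi_{(t,\infty)}(h)$, so almost-invariant level sets exist. But the only trace information you have is that $\int_0^\infty\bigl(\mathrm{Tr}(E_t\rho(a))-\tau(a)\mathrm{Tr}(E_t)\bigr)\,dt$ is small, which says nothing about any individual $t$.

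For instance, take $A=\C^2$, $\tau(x,y)=(x+y)/2$, $\rho(x,y)=\operatorname{diag}(x,y,y)$ and $h=\operatorname{diag}(\frac12,\frac14,\frac14)$. Then $h$ commutes with $\rho(A)$ and $\mathrm{Tr}(h\rho(\cdot))=\tau$. Yet the nonzero spectral projections $1$, $e_{11}$, $e_{22}+e_{33}$ of $h$ give normalized traces $(x+2y)/3$, $x$, $y$. So a Namioka-type selection of one good level set, as in the group case, cannot deliver $\tr\circ\varphi_i\to\tau$.

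The fix is to use all level sets at once. Perturb $h$ so that its eigenvalues lie in $\frac1N\mathbb Z$, and write $h=\frac1N\sum_jE_j$ with $E_j=\chi_{[j/N,\infty)}(h)$. Let $\varphi(a)=\bigoplus_jE_j\rho(a)E_j$ act on $\bigoplus_jE_jK\cong\C^N$; this space has dimension $N$ because $\sum_j\operatorname{rank}E_j=N\,\mathrm{Tr}(h)=N$. Then:
\begin{itemize}
\item $\tr_N\circ\varphi=\mathrm{Tr}(h\rho(\cdot))$ exactly;
\item Connes' inequality with Powers--St\o{}rmer gives $\frac1N\sum_j\|[\rho(u),E_j]\|_{\mathrm{HS}}^2\le2\|\rho(u)h\rho(u)^*-h\|_1^{1/2}$;
\item hence $\|\varphi(ab)-\varphi(a)\varphi(b)\|_{2,\tr_N}^2\le2\|b\|^2\|\rho(a)h\rho(a)^*-h\|_1^{1/2}$ for unitary $a$.
\end{itemize}
In the example above this gives $N=4$ and the correct trace. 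This weighted direct sum is presumably the ``averaging'' you allude to, but it has to replace the single compression, not supplement it.
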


We need the following bounded form.  Condition (iii) is a mixed bimodule
condition: the copy \(\rho(A)\) acts in the domain and the GNS copy
\(\pi_\tau(A)\) acts in the codomain.

\begin{proposition}\label{prop:bounded}
Under the hypotheses of Theorem \ref{thm:brown}, the following are equivalent.
\begin{romanenumerate}
\item \(\tau\) is amenable.
\item There is a bounded functional \(\Phi\in B(K)^*\) satisfying
\[
   \Phi(\rho(a))=\tau(a),
   \qquad
   \Phi(\rho(a)T)=\Phi(T\rho(a))
\]
for all \(a\in A\) and \(T\in B(K)\).
\item There is a bounded linear map
\[
   u:B(K)\to \pi_\tau(A)''
\]
such that \(u(\rho(a))=\pi_\tau(a)\) and
\[
   u(\rho(a)T\rho(b))
   =\pi_\tau(a)u(T)\pi_\tau(b)
\]
for all \(a,b\in A\) and \(T\in B(K)\).
\end{romanenumerate}
\end{proposition}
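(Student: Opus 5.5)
The plan is to prove (i)\(\Rightarrow\)(iii)\(\Rightarrow\)(ii)\(\Rightarrow\)(i). The first implication is immediate from Theorem \ref{thm:brown}(iii). A u.c.p. map \(u:B(K)\to\pi_\tau(A)''\) with \(u(\rho(a))=\pi_\tau(a)\) has \(\rho(A)\) in its multiplicative domain. By Choi's theorem it therefore satisfies \(u(\rho(a)T\rho(b))=\pi_\tau(a)u(T)\pi_\tau(b)\), and it is bounded.

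For (iii)\(\Rightarrow\)(ii), let \(\bar\tau\) denote the normal tracial vector state \(x\mapsto\langle x\widehat1,\widehat1\rangle\) on \(\pi_\tau(A)''\), and put \(\Phi=\bar\tau\circ u\). Then \(\Phi\) is bounded and \(\Phi(\rho(a))=\bar\tau(\pi_\tau(a))=\tau(a)\). The bimodule property together with traciality of \(\bar\tau\) gives
\[
\Phi(\rho(a)T)=\bar\tau(\pi_\tau(a)u(T))=\bar\tau(u(T)\pi_\tau(a))=\Phi(T\rho(a)).
\]

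The substantive direction is (ii)\(\Rightarrow\)(i). Here I would manufacture a genuine hypertrace from \(\Phi\) and then appeal to Theorem \ref{thm:brown}(ii).

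\emph{Step 1: make \(\Phi\) hermitian.} Set \(\Phi^*(T)=\overline{\Phi(T^*)}\). A one-line computation shows \(\Phi^*\) satisfies the same two identities, because \(\tau\) is hermitian and the centrality identity is stable under adjoints. Replacing \(\Phi\) by \(\tfrac12(\Phi+\Phi^*)\), we may assume \(\Phi\) is hermitian.

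\emph{Step 2: pass to the positive part.} Centrality means \(\Phi\circ\Ad\rho(v)=\Phi\) for every unitary \(v\in A\). Since \(\Ad\rho(v)\) is an isometric order automorphism of \(B(K)\), uniqueness of the Jordan decomposition \(\Phi=\Phi_+-\Phi_-\) forces \(\Phi_\pm\circ\Ad\rho(v)=\Phi_\pm\). Because unitaries span \(A\), both \(\Phi_\pm\) are positive \(\rho(A)\)-central functionals. Note \(\Phi_+(1)=1+\Phi_-(1)\ge1\), so \(\Omega=\Phi_+/\Phi_+(1)\) is a state on \(B(K)\) satisfying \(\Omega(\rho(a)T)=\Omega(T\rho(a))\).

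\emph{Step 3: recover \(\tau\) itself.} The obstacle, which I expect to be the main one, is that \(\Omega\circ\rho\) need not equal \(\tau\). Instead,
\[
\Omega\circ\rho=\frac{\tau+\sigma}{1+\sigma(1)},\qquad \sigma=\Phi_-\circ\rho,
\]
where \(\sigma\) is a positive tracial functional. By Theorem \ref{thm:brown} applied to the trace \(\tau'=\Omega\circ\rho\), the trace \(\tau'\) is amenable. Moreover \(\tau\le(1+\sigma(1))\tau'\), so \(\tau\) is a convex component of \(\tau'\): if \(\sigma\neq0\), then \(\tau'\) is a proper convex combination of \(\tau\) and \(\sigma/\sigma(1)\).

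To finish, I would invoke that the amenable traces form a face of \(T(A)\), which I recall from \cite{Brown06}. I would prefer to verify it directly, as follows. Let \(\Omega'\) be a hypertrace for \(\tau'\), and use the Radon--Nikodym derivative \(h\in\pi_{\tau'}(A)'\cap\pi_{\tau'}(A)''\) of \(\tau\) with respect to \(\tau'\). Then cut \(\Omega'\) down by \(h\), via the normal conditional-expectation-type map \(u'\) of Theorem \ref{thm:brown}(iii) for \(\tau'\), setting \(T\mapsto\bar\tau'(h\,u'(T))\). This yields a hypertrace restricting to \(\tau\). Centrality of that functional uses that \(h\) is central in \(\pi_{\tau'}(A)''\), and it is the one calculation I would check carefully.
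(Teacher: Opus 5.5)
Your proposal is correct and follows the paper's proof essentially step for step. The steps match as follows: the multiplicative-domain argument gives (i)$\Rightarrow$(iii); composing with the normal trace gives (iii)$\Rightarrow$(ii); and for (ii)$\Rightarrow$(i) you use the same hermitian symmetrization, $\Ad\rho(v)$-invariance of the Jordan parts, normalization of the positive part to a hypertrace for $\tau'$, and the face property of amenable traces. Your optional direct check of that face property is also sound: you take the central Radon--Nikodym derivative $h$ and the state $T\mapsto\bar\tau'(h\,u'(T))$. One small correction is that $u'$ is generally not normal, but your computation never uses normality of $u'$.
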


\begin{proof}
For (i)\(\Rightarrow\)(iii), take the u.c.p. map supplied by Theorem
\ref{thm:brown}.  For \(a\in A\),
\[
 u(\rho(a)^*\rho(a))
 =\pi_\tau(a)^*\pi_\tau(a)
 =u(\rho(a))^*u(\rho(a)),
\]
and the analogous equality holds with the factors reversed.  Hence
\(\rho(A)\) lies in the multiplicative domain of \(u\), which gives the
mixed bimodule identity.

For (iii)\(\Rightarrow\)(ii), define
\[
   \Phi(T)=\widetilde\tau(u(T)),
\]
where \(\widetilde\tau\) is the normal trace on \(\pi_\tau(A)''\).  Then
\(\Phi(\rho(a))=\tau(a)\), and bimodularity and traciality give
\[
   \Phi(\rho(a)T)
   =\widetilde\tau(\pi_\tau(a)u(T))
   =\widetilde\tau(u(T)\pi_\tau(a))
   =\Phi(T\rho(a)).
\]

Assume (ii).  Put
\[
   \Phi^\sharp(T)=\overline{\Phi(T^*)},
   \qquad
   f=\frac{\Phi+\Phi^\sharp}{2}.
\]
Then \(f\) is self-adjoint, \(\rho(A)\)-central, and
\(f(\rho(a))=\tau(a)\).  Write its Jordan decomposition as
\(f=f_+-f_-\).  Centrality makes \(f\) invariant under
\(\Ad(\rho(v))\) for every unitary \(v\in A\).  Uniqueness of the Jordan
decomposition makes \(f_+\) and \(f_-\) invariant as well.  For such a
unitary,
\[
   f_+(\rho(v)T)
   =f_+(\rho(v)(T\rho(v))\rho(v)^*)
   =f_+(T\rho(v)),
\]
and similarly for \(f_-\).  Since unitaries linearly span a unital
C*-algebra, both positive parts are \(\rho(A)\)-central.

Let \(c=f_+(1)\).  On \(\rho(A)\) we have
\[
   f_+=\tau+f_-,
\]
so \(c\geq1\).  The functional \(\Omega=c^{-1}f_+\) is a
\(\rho(A)\)-central state.  Thus
\[
   \gamma(a)=\Omega(\rho(a))
\]
is an amenable tracial state by Theorem \ref{thm:brown}.  If \(c=1\),
then \(f_-=0\) and \(\gamma=\tau\).  If \(c>1\), then
\[
   \eta(a)=\frac{f_-(\rho(a))}{c-1}
\]
is a tracial state and
\[
   \gamma=\frac1c\tau+\left(1-\frac1c\right)\eta.
\]
Amenable traces form a face of \(T(A)\)
\cite[Proposition~3.5.3]{Brown06}; hence \(\tau\) is amenable.
\end{proof}

\section{The test one-cocycle}

Retain the notation \(K\), \(\rho\), \(E\), \(R_a\), and
\(X_{\tau,\rho}\) from the introduction.

\begin{lemma}\label{lem:testmodule}
The space \(X_{\tau,\rho}\) is a norm-closed subspace of
\(\mathcal B(B(K),B(H_\tau))\), hence is a Banach space.  The formula
\[
   (a\cdot\varphi\cdot b)(T)=R_b\varphi(T)R_a
\]
makes it a unital Banach \(A\)-bimodule.  Moreover,
\(\delta_{\tau,\rho}(a)\in X_{\tau,\rho}\), and
\(\delta_{\tau,\rho}:A\to X_{\tau,\rho}\) is a bounded derivation with
\[
   \|\delta_{\tau,\rho}(a)\|
   \leq2\|\pi_\tau(a)\|
   \leq2\|a\|.
\]
\end{lemma}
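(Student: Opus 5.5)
Closedness and the bimodule structure are routine; the one real input is that the operators \(R_a\) commute with \(\pi_\tau(A)\), and I will verify that first.

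\emph{Step 1: the right action.} On \(H_\tau=L^2(A,\tau)\), \(J\pi_\tau(a^*)J\) acts on \(\widehat x\) as
\[
   J\pi_\tau(a^*)\widehat{x^*}=J\widehat{a^*x^*}=\widehat{xa}.
\]
So \(R_a\) is right multiplication by \(a\), extended by continuity; it is bounded because \(J\) is a conjugate-linear isometry. From this, \(R_aR_b\widehat x=\widehat{xba}=R_{ba}\widehat x\), so \(a\mapsto R_a\) is a unital antihomomorphism, linear in \(a\). Also \(R_a\in\pi_\tau(A)'\), since left and right multiplications commute on the dense subspace. Finally \(\|R_a\|=\|\pi_\tau(a^*)\|=\|\pi_\tau(a)\|\leq\|a\|\).

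\emph{Step 2: \(X_{\tau,\rho}\) is a Banach bimodule.} Each defining condition of \(X_{\tau,\rho}\) (vanishing on \(\rho(A)\), and the mixed bimodularity) is preserved under operator-norm limits of maps \(B(K)\to B(H_\tau)\), because evaluation at a fixed \(T\) and multiplication by fixed operators are continuous. Hence \(X_{\tau,\rho}\) is closed. For \(\varphi\in X_{\tau,\rho}\), the map \(T\mapsto R_b\varphi(T)R_a\) is again in \(X_{\tau,\rho}\):
\begin{itemize}
\item it vanishes on \(\rho(A)\), since \(\varphi\) does;
\item mixed bimodularity survives because \(R_a,R_b\) commute with \(\pi_\tau(A)\) by Step 1.
\end{itemize}
The module axioms follow from the antihomomorphism property:
\[
   (a_1\cdot(a_2\cdot\varphi))(T)=R_{a_2}\varphi(T)R_{a_1}R_{a_2}\cdots
\]
Carefully, \(a\cdot\varphi\) is \(T\mapsto\varphi(T)R_a\), so \(a_1\cdot(a_2\cdot\varphi)\) is \(\varphi(T)R_{a_2}R_{a_1}=\varphi(T)R_{a_1a_2}\), which is \((a_1a_2)\cdot\varphi\). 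The right action works the same way, the two actions commute, and \(R_1=1\) gives unitality. The norm estimate \(\|a\cdot\varphi\cdot b\|\leq\|a\|\|b\|\|\varphi\|\) is immediate.

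\emph{Step 3: the derivation.} The map \(\delta(a)\) is bounded with \(\|\delta(a)\|\leq2\|R_a\|\|E\|\leq2\|\pi_\tau(a)\|\). It lies in \(X_{\tau,\rho}\) for two reasons:
\begin{itemize}
\item \(E\) satisfies the mixed bimodule identity and \(R_a\) commutes with \(\pi_\tau(A)\);
\item \(E(\rho(b))=\pi_\tau(b)\) commutes with \(R_a\), so \(\delta(a)(\rho(b))=0\).
\end{itemize}
Linearity in \(a\) is clear. For the Leibniz rule, with \(S=E(T)\),
\[
   a\cdot\delta(b)+\delta(a)\cdot b
   =(R_bS-SR_b)R_a+R_b(R_aS-SR_a),
\]
which telescopes to \(R_bR_aS-SR_bR_a=R_{ab}S-SR_{ab}=\delta(ab)(T)\).

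The only point needing care is the order reversal in Step 1, namely \(R_aR_b=R_{ba}\). It is what makes the reversed-looking action \(R_b\varphi R_a\) a genuine left/right module and what makes the derivation identity close up, so I will verify it on the dense subspace first.
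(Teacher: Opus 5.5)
Your proof is correct and follows essentially the same route as the paper. Both identify \(R_a\) as right multiplication by \(a\), giving \(R_{ab}=R_bR_a\), \(R_a\in\pi_\tau(A)'\) and \(\|R_a\|=\|\pi_\tau(a)\|\); the paper also records explicitly that traciality makes \(J\) a well-defined isometry, which you use implicitly. They then check closedness, the module axioms, membership of \(\delta_{\tau,\rho}(a)\) and the Leibniz rule exactly as you do, so just delete the abandoned false-start formula in Step 2 and keep the corrected computation \(\varphi(T)R_{a_2}R_{a_1}=\varphi(T)R_{a_1a_2}\).
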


\begin{proof}
The tracial identity
\[
 \|\widehat{a^*}\|_2^2=\tau(aa^*)=\tau(a^*a)=\|\widehat a\|_2^2
\]
shows directly that \(J\widehat a=\widehat{a^*}\) defines an antiunitary
involution.  On the dense subspace coming from \(A\), \(R_a\) is right
multiplication by \(a\).  Consequently,
\[
   R_{ab}=R_bR_a,
   \qquad
   R_a\pi_\tau(c)=\pi_\tau(c)R_a,
   \qquad
   \|R_a\|=\|\pi_\tau(a)\|.
\]
The standard commutant relation for the tracial GNS representation is
\[
   (J\pi_\tau(A)J)'=\pi_\tau(A)''.
\]

Norm limits preserve the vanishing and mixed bimodularity conditions defining
\(X_{\tau,\rho}\), so this space is closed.  Since each \(R_a\) commutes
with \(\pi_\tau(A)\), multiplication of the values of \(\varphi\) by
\(R_b\) and \(R_a\) preserves those conditions.  The identities
\(R_{ab}=R_bR_a\) give the module axioms, and
\[
   \|a\cdot\varphi\cdot b\|
   \leq\|a\|\,\|\varphi\|\,\|b\|.
\]

Because \(E(\rho(c))=\pi_\tau(c)\), the operator
\(\delta_{\tau,\rho}(a)(\rho(c))\) is zero.  Also, for
\(c,d\in A\),
\begin{align*}
 \delta_{\tau,\rho}(a)(\rho(c)T\rho(d))
 &=R_a\pi_\tau(c)E(T)\pi_\tau(d)
   -\pi_\tau(c)E(T)\pi_\tau(d)R_a\\
 &=\pi_\tau(c)\delta_{\tau,\rho}(a)(T)\pi_\tau(d).
\end{align*}
Thus \(\delta_{\tau,\rho}(a)\in X_{\tau,\rho}\).  Finally,
\begin{align*}
 &\bigl(a\cdot\delta_{\tau,\rho}(b)
       +\delta_{\tau,\rho}(a)\cdot b\bigr)(T)\\
 &\quad=(R_bE(T)-E(T)R_b)R_a
       +R_b(R_aE(T)-E(T)R_a)\\
 &\quad=R_bR_aE(T)-E(T)R_bR_a\\
 &\quad=\delta_{\tau,\rho}(ab)(T).
\end{align*}
Since \(E\) is contractive, the norm estimate is immediate.
\end{proof}

\begin{theorem}\label{thm:main}
Let \(A\) be a unital C*-algebra and let \(\tau\in T(A)\).  Choose a
faithful representation \(\rho\) containing \(\pi_\tau\) as a direct
summand, as above.  Then
\[
   \delta_{\tau,\rho}:A\to X_{\tau,\rho}
\]
is inner if and only if \(\tau\) is amenable.
\end{theorem}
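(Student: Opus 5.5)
Both directions reduce to Proposition \ref{prop:bounded}. The key observation is that innerness of \(\delta_{\tau,\rho}\) is equivalent to the existence of a map \(\psi\in X_{\tau,\rho}\) such that \(u:=E-\psi\) has values commuting with every \(R_a\). By the commutant relation \((J\pi_\tau(A)J)'=\pi_\tau(A)''\), this says \(u\) takes values in \(\pi_\tau(A)''\). Such a \(u\) is exactly the mixed bimodule map of Proposition \ref{prop:bounded}(iii).

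We first unwind the definitions. Suppose \(\delta_{\tau,\rho}=\Delta^0\psi\) for some \(\psi\in X_{\tau,\rho}\). Then for all \(a\in A\) and \(T\in B(K)\),
\[
R_aE(T)-E(T)R_a=(a\cdot\psi-\psi\cdot a)(T)=\psi(T)R_a-R_a\psi(T).
\]
Rearranging gives \(R_a(E(T)+\psi(T))=(E(T)+\psi(T))R_a\). Set \(u=E+\psi\). Then \(u(T)\) commutes with \(J\pi_\tau(A)J\), because \(R_{a}\) ranges over \(J\pi_\tau(A)J\) as \(a\) ranges over \(A\). Hence \(u(T)\in\pi_\tau(A)''\). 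Moreover:
\begin{itemize}
\item \(u(\rho(a))=\pi_\tau(a)+0=\pi_\tau(a)\), since \(\psi\) kills \(\rho(A)\);
\item \(u(\rho(a)T\rho(b))=\pi_\tau(a)u(T)\pi_\tau(b)\), since both \(E\) and \(\psi\) are mixed bimodular.
\end{itemize}
These are exactly the conditions of Proposition \ref{prop:bounded}(iii), so \(\tau\) is amenable.

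Conversely, suppose \(\tau\) is amenable. Proposition \ref{prop:bounded} (via Theorem \ref{thm:brown}) gives a bounded mixed-bimodular \(u:B(K)\to\pi_\tau(A)''\) with \(u(\rho(a))=\pi_\tau(a)\). Put \(\psi=u-E\). Then:
\begin{itemize}
\item \(\psi\) is bounded and vanishes on \(\rho(A)\);
\item \(\psi\) is mixed bimodular, since \(u\) is and \(E\) satisfies \(E(\rho(a)T\rho(b))=\pi_\tau(a)E(T)\pi_\tau(b)\).
\end{itemize}
So \(\psi\in X_{\tau,\rho}\). Since \(u(T)\in\pi_\tau(A)''\) commutes with each \(R_a\in J\pi_\tau(A)J\),
\[
(\Delta^0\psi)(a)(T)=\psi(T)R_a-R_a\psi(T)=R_aE(T)-E(T)R_a=\delta_{\tau,\rho}(a)(T).
\]
Thus \(\delta_{\tau,\rho}=\Delta^0\psi\) is inner. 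The sign convention is immaterial by the remark in the preliminaries.

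The main obstacle is the step asserting that commuting with all \(R_a\) forces \(u(T)\in\pi_\tau(A)''\). This requires two facts:
\begin{itemize}
\item the set \(\{R_a\}\) is all of \(J\pi_\tau(A)J\), which holds since \(a\mapsto a^*\) is onto;
\item the commutant identity \((J\pi_\tau(A)J)'=\pi_\tau(A)''\) holds for the possibly nonfaithful tracial GNS representation.
\end{itemize}
The second fact was recorded in the proof of Lemma \ref{lem:testmodule} as the standard relation \(J\pi_\tau(A)''J=\pi_\tau(A)'\) for the finite von Neumann algebra \(\pi_\tau(A)''\) with cyclic separating trace vector; I will invoke it there. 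The remaining verifications are routine bookkeeping of the bimodule identities. The direction from innerness to amenability passes only through the bounded, not necessarily positive, version (iii) of Proposition \ref{prop:bounded}. This is why that proposition, rather than Theorem \ref{thm:brown} alone, is needed.
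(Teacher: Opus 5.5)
Your proposal is correct and follows the paper's proof essentially verbatim. For amenable \(\tau\) you set \(\psi=u-E\), where \(u\) is the mixed-bimodular map obtained from Brown's u.c.p. map via the multiplicative-domain argument; conversely you show that \(E+\psi\) lands in \((J\pi_\tau(A)J)'=\pi_\tau(A)''\) and invoke Proposition \ref{prop:bounded}(iii), which is exactly why the paper needs that bounded, non-positive form rather than Theorem \ref{thm:brown} alone.
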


\begin{proof}
Suppose first that \(\tau\) is amenable.  Since \(\rho\) is faithful,
Theorem \ref{thm:brown} gives a u.c.p. map
\[
   u:B(K)\to\pi_\tau(A)''
\]
such that \(u(\rho(a))=\pi_\tau(a)\).  The multiplicative-domain theorem
makes \(u\) a mixed \(A\)-bimodule map.  The compression \(E\) has the
same mixed bimodularity, so
\[
   \varphi=u-E
\]
belongs to \(X_{\tau,\rho}\).  Since the range of \(u\) commutes with
every \(R_a\),
\begin{align*}
 (\Delta^0\varphi)(a)(T)
 &=\varphi(T)R_a-R_a\varphi(T)\\
 &=(u(T)-E(T))R_a-R_a(u(T)-E(T))\\
 &=R_aE(T)-E(T)R_a\\
 &=\delta_{\tau,\rho}(a)(T).
\end{align*}
Thus the test cocycle is inner.

Conversely, suppose \(\delta_{\tau,\rho}=\Delta^0\varphi\) for some
\(\varphi\in X_{\tau,\rho}\).  Then
\[
   R_aE(T)-E(T)R_a=\varphi(T)R_a-R_a\varphi(T),
\]
or equivalently,
\[
   R_a(E(T)+\varphi(T))=(E(T)+\varphi(T))R_a.
\]
Hence \(U=E+\varphi\) takes values in
\((J\pi_\tau(A)J)'=\pi_\tau(A)''\).  It is a bounded mixed
\(A\)-bimodule map and satisfies \(U(\rho(a))=\pi_\tau(a)\).
Proposition \ref{prop:bounded} now implies that \(\tau\) is amenable.
\end{proof}

\begin{corollary}\label{cor:faithful-main}
Suppose that \(\tau\) is faithful.  Let \(X_\tau\) be the Banach space of
bounded \(\pi_\tau(A)\)-bimodule maps
\(\varphi:B(H_\tau)\to B(H_\tau)\) which vanish on \(\pi_\tau(A)\),
with
\[
   (a\cdot\varphi\cdot b)(T)=R_b\varphi(T)R_a.
\]
Then the derivation
\[
   \delta_\tau(a)(T)=R_aT-TR_a
\]
is inner if and only if \(\tau\) is amenable.
\end{corollary}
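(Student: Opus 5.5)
The plan is to obtain the corollary as the special case of Theorem \ref{thm:main} in which the auxiliary faithful representation is just the GNS representation. Since \(\tau\) is faithful, \(\pi_\tau\) is injective: if \(\pi_\tau(a)=0\), then \(\tau(a^*a)=\|\pi_\tau(a)\widehat 1\|_2^2=0\), so \(a=0\). Thus \(\pi_\tau\) is a faithful unital representation. We may therefore take \(K_0=0\), that is, \(K=H_\tau\), \(\rho=\pi_\tau\), and \(V=\id\). With these choices \(E=\id_{B(H_\tau)}\).

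First we match the module. With \(\rho=\pi_\tau\), the definition of \(X_{\tau,\rho}\) consists of bounded maps \(\varphi:B(H_\tau)\to B(H_\tau)\) such that \(\varphi(\pi_\tau(a))=0\) and \(\varphi(\pi_\tau(a)T\pi_\tau(b))=\pi_\tau(a)\varphi(T)\pi_\tau(b)\). These are exactly the bounded \(\pi_\tau(A)\)-bimodule maps vanishing on \(\pi_\tau(A)\), so \(X_{\tau,\rho}=X_\tau\). Both carry the bimodule structure \((a\cdot\varphi\cdot b)(T)=R_b\varphi(T)R_a\). The test cocycle is \(\delta_{\tau,\rho}(a)(T)=R_aE(T)-E(T)R_a=R_aT-TR_a=\delta_\tau(a)(T)\). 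Lemma \ref{lem:testmodule} then shows that \(X_\tau\) is a Banach \(A\)-bimodule and that \(\delta_\tau\) is a bounded derivation into it.

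Theorem \ref{thm:main} then applies verbatim and gives that \(\delta_\tau\) is inner if and only if \(\tau\) is amenable.

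The only point that needs care is whether Theorem \ref{thm:main} allows \(K_0=0\), since the introduction phrases the construction with an auxiliary faithful \(\sigma\). We resolve this by checking the proof of Theorem \ref{thm:main}. It uses only three facts:
\begin{itemize}
\item \(\rho\) is faithful, which is needed to invoke Theorem \ref{thm:brown} and Proposition \ref{prop:bounded};
\item \(E\) is a contractive mixed bimodule map with \(E(\rho(a))=\pi_\tau(a)\);
\item the commutant relation \((J\pi_\tau(A)J)'=\pi_\tau(A)''\).
\end{itemize}
All three hold for \(\rho=\pi_\tau\) and \(E=\id\). Alternatively, one can formally take \(\sigma=\pi_\tau\) as well, but the direct substitution is cleaner. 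The introduction already notes this choice is permitted.
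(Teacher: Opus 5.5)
Your proposal is correct and is the paper's own argument: since $\tau$ is faithful, $\pi_\tau$ is faithful, and specializing Theorem \ref{thm:main} to $K=H_\tau$, $\rho=\pi_\tau$, $E=\id$ gives $X_{\tau,\rho}=X_\tau$ and $\delta_{\tau,\rho}=\delta_\tau$. One small caution about your aside: taking $\sigma=\pi_\tau$ would give $K=H_\tau\oplus H_\tau$ and a different module of maps $B(H_\tau\oplus H_\tau)\to B(H_\tau)$, so it would not literally yield the corollary; the direct substitution with $K_0=0$, which you rightly prefer, is the one that works.
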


\begin{proof}
Because \(\tau\) is faithful, \(\pi_\tau\) is faithful.  In Theorem
\ref{thm:main}, take \(K=H_\tau\), \(\rho=\pi_\tau\), and \(E=\id\).
\end{proof}

\begin{remark}\label{rem:nonfaithful}
The faithful summand in Theorem \ref{thm:main} cannot simply be omitted.  For
an arbitrary trace, the GNS-only cocycle in Corollary
\ref{cor:faithful-main} depends only on \(B_\tau=\pi_\tau(A)\) and detects
amenability of the induced faithful trace
\(\bar\tau(\pi_\tau(a))=\tau(a)\) on \(B_\tau\).  Amenability need not
descend from \(A\) to this quotient.

For example, let \(A=C^*(\mathbb F_2)\) and let \(\tau\) be the canonical
trace.  Residual finiteness of \(\mathbb F_2\) makes \(\tau\) a weak-* limit of
finite-dimensional traces, hence amenable
\cite[Proposition~4.1.4]{Brown06}.  But
\(B_\tau=C_r^*(\mathbb F_2)\), whose canonical trace is not amenable because
\(\mathbb F_2\) is nonamenable \cite[Proposition~4.1.1]{Brown06}.  Thus the
GNS-only cocycle is not inner in this example, even though \(\tau\) is
amenable on the full group C*-algebra.
\end{remark}

\section{Embeddable \texorpdfstring{\(\mathrm{II}_1\)}{II1} factors}

\begin{theorem}\label{thm:embeddable}
Let \(M\) be a \(\mathrm{II}_1\) factor with separable predual and faithful
normal trace \(\tau\).  The following are equivalent.
\begin{romanenumerate}
\item There is a unital normal trace-preserving *-monomorphism
\[
   M\longrightarrow\R^\omega
\]
for some free ultrafilter \(\omega\) on \(\N\).
\item There are separable unital C*-algebras \(B\) and \(A\subseteq M\),
with \(A\) ultraweakly dense in \(M\), and a surjective *-homomorphism
\(\rho:B\to A\) such that \(\tau|_A\circ\rho\) is amenable.
\item There is a unital *-monomorphism
\[
   \rho:C^*(\mathbb F_\infty)\longrightarrow M
\]
with ultraweakly dense range such that \(\tau\circ\rho\) is amenable.
Equivalently, \(\tau\circ\rho\) is quasidiagonal, LFD, or RFD.
\item There is a separable unital ultraweakly dense C*-subalgebra
\(A\subseteq M\) such that \(\tau|_A\) is amenable.
\item There is a separable unital ultraweakly dense C*-subalgebra
\(A\subseteq M\) such that the faithful-trace cocycle
\(\delta_{\tau|_A}\) of Corollary \ref{cor:faithful-main} is inner.
\end{romanenumerate}
\end{theorem}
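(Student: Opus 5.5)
The plan is to run the cycle (i)\(\Rightarrow\)(iii)\(\Rightarrow\)(ii)\(\Rightarrow\)(i), and then attach (iv) and (v) to it. The equivalence (iv)\(\Leftrightarrow\)(v) is Corollary \ref{cor:faithful-main} applied to \(\tau|_A\); this is legitimate because \(\tau\) is faithful on \(M\), so \(\tau|_A\) is faithful. For (iv)\(\Rightarrow\)(ii), take \(B=A\) and \(\rho=\id\). For (iii)\(\Rightarrow\)(iv), take \(A=\rho(C^*(\mathbb F_\infty))\). This algebra is separable and ultraweakly dense, and since \(\rho\) is injective, \(\tau|_A\) corresponds to \(\tau\circ\rho\) under an isomorphism, so it is amenable. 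The extra clause in (iii), that amenable, quasidiagonal, LFD and RFD coincide on \(C^*(\mathbb F_\infty)\), is the Remark following Definition \ref{def:traceclasses}.

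For (i)\(\Rightarrow\)(iii), I would first choose a countable family of unitaries \(u_1,u_2,\ldots\) in \(M\) generating a C*-algebra that is ultraweakly dense; this uses separability of the predual and the fact that unitaries span \(M\). Mapping the free generators of \(\mathbb F_\infty\) to these unitaries gives a *-homomorphism \(\rho:C^*(\mathbb F_\infty)\to M\) with dense range. To make \(\rho\) injective, I would enlarge the generating set: interleave the \(u_k\) with generators of a copy of \(L(\mathbb F_\infty)\) placed inside \(M\) in free position, or more simply replace \(\rho\) by a faithful representation that is combined without changing the trace. Alternatively, if the theorem is read as allowing a non-injective \(\rho\), I would only need a surjection onto a dense subalgebra, and then (ii) is the natural target. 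Amenability of \(\tau\circ\rho\) should follow from embeddability. Composing with \(M\hookrightarrow\R^\omega\), each generator \(\rho(g_k)\) lifts to a sequence of unitaries in \(\R\), and hence approximately to unitaries in matrix algebras \(M_{n}\). Freeness of \(\mathbb F_\infty\) means that any choice of unitary matrices defines a finite-dimensional representation \(\theta_n\). Convergence of moments in the ultrapower then gives \(\tr\circ\theta_n\to\tau\circ\rho\) weak-* along \(\omega\), where a diagonal argument over finitely many generators and words turns this into a sequence. So \(\tau\circ\rho\) is RFD, hence amenable.

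For (ii)\(\Rightarrow\)(i), amenability of \(\tau|_A\circ\rho\) gives u.c.p. maps \(\varphi_i:B\to M_{n(i)}\) that are asymptotically multiplicative in \(2\)-norm and asymptotically trace-preserving; since \(B\) is separable, these can be taken to be sequences. Embedding each \(M_{n(i)}\) unitally into \(\R\) and passing to \(\R^\omega\), the maps induce a trace-preserving *-homomorphism \(\Psi:B\to\R^\omega\). Because traces are preserved, \(\ker\Psi\) equals the trace-kernel of \(\tau\circ\rho\), and this contains \(\ker\rho\). Hence \(\Psi\) factors through \(A\), giving a trace-preserving map \(A\to\R^\omega\). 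A trace-preserving map into a finite von Neumann algebra is isometric in \(2\)-norm, so it extends to a normal trace-preserving embedding of \(\pi_\tau(A)''=M\).

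The step I expect to be the main obstacle is the injectivity of \(\rho\) in (i)\(\Rightarrow\)(iii). The most likely fix is to take the generators to be a norm-dense sequence of unitaries of a separable ultraweakly dense C*-algebra, and to argue faithfulness by appending infinitely many further free generators whose images form a family that already forces injectivity. If this cannot be made to work cleanly, I would route the argument (i)\(\Rightarrow\)(ii)\(\Rightarrow\)(iv) instead. For (ii)\(\Rightarrow\)(iv) the tool is that amenability does not descend to quotients in general, so I would instead use the embedding from (ii)\(\Rightarrow\)(i) together with the fact that a trace on \(A\) that factors through \(\R^\omega\) is amenable.
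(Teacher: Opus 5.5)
Your overall scheme is the paper's: (i)\(\Rightarrow\)(iii)\(\Rightarrow\)(iv)\(\Rightarrow\)(ii)\(\Rightarrow\)(i), with (iv)\(\Leftrightarrow\)(v) from Corollary~\ref{cor:faithful-main}. Your hands-on (ii)\(\Rightarrow\)(i) is correct: you build \(\Psi:B\to\R^\omega\) and factor it through \(\rho\) via the trace-kernel. Your direct proof that \(\tau\circ\rho\) is RFD is also correct: you lift the unitaries \(\rho(g_k)\) to \(\R\), approximate them by matrix unitaries, and use freeness. This is a more elementary substitute for the paper's appeal to the local lifting property.

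The genuine gap is the one you flagged. You never produce an \emph{injective} unital \(\rho:C^*(\mathbb F_\infty)\to M\) with ultraweakly dense range. The paper does not construct this either; it imports it from Brown's universal representation result (Proposition~5.1.1 of Brown's memoir). None of your remedies works:
\begin{itemize}
\item A copy of \(L(\mathbb F_\infty)\) ``in free position'' need not exist in \(M\). Take \(M=\R\), all of whose von Neumann subalgebras are injective.
\item Even when such a copy exists, free Haar unitaries give the left regular representation, which is not faithful on \(C^*(\mathbb F_\infty)\).
\item More fundamentally, faithfulness on the C*-algebra generated by some of the free generators does not give faithfulness on the full free product. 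Relations between the two families of images create kernel: if they commute, \(1-ghg^{-1}h^{-1}\in\ker\rho\).
\item ``Combining with a faithful representation without changing the trace'' is unspecified, and a direct sum leaves \(M\).
\end{itemize}

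The fallback does not rescue this. Reaching (iv) from (ii) via ``a trace that factors through \(\R^\omega\) is amenable'' fails, because factoring through \(\R^\omega\) is only hyperlinearity. The canonical trace on \(C^*_r(\mathbb F_2)\) factors through \(L(\mathbb F_2)\subseteq\R^\omega\) but is not amenable (Remark~\ref{rem:nonfaithful}). Amenability additionally requires a u.c.p.\ lift of the map into \(\R^\omega\).

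Injectivity of \(\rho\) is exactly what transports the amenability of \(\tau\circ\rho\) to \(\tau|_A\) on \(A=\rho(C^*(\mathbb F_\infty))\). Your RFD argument gives that amenability for every \(\rho\), but without injectivity it need not descend to the quotient. So (iv) and (v) are not reached from (i).

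To close the gap, either cite Brown's result as the paper does, or prove it by Baire category in the Polish space \(U(M)^{\N}\) with the strong topology:
\begin{itemize}
\item Dense range is a dense \(G_\delta\) condition.
\item Faithfulness on each \(C^*(\mathbb F_N)\) is a \(G_\delta\) condition, by lower semicontinuity of the norm.
\item That condition is also dense. Move the first \(N\) unitaries slightly in \(2\)-norm so that they commute with a projection \(p\) of small trace. Then place a faithful unital copy of \(C^*(\mathbb F_\infty)\) in the corner \(pMp\); this exists because \(C^*(\mathbb F_\infty)\) is RFD and so embeds in \(\R\subseteq pMp\).
\end{itemize}
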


\begin{proof}
Assume (ii), and put \(\sigma=\tau|_A\circ\rho\).  Faithfulness of
\(\tau\) gives
\[
   \{b\in B:\sigma(b^*b)=0\}=\ker\rho.
\]
Consequently, the map initially defined by
\[
   W\widehat b=\widehat{\rho(b)}
\]
identifies \(L^2(B,\sigma)\) with \(L^2(M,\tau)\).  Its range is dense:
by Kaplansky density, the ultraweak density of \(A\) implies its \(2\)-norm
density on bounded subsets of \(M\).  Under this identification,
\[
   \pi_\sigma(B)''\cong A''=M.
\]
The finite-dimensional approximants for the amenable trace \(\sigma\) give a
trace-preserving embedding of its GNS von Neumann algebra into \(\R^\omega\)
\cite[Theorems~3.1.6 and~3.1.7]{Brown06}.  Hence (ii) implies (i).

Now assume (i).  Brown's universal representation result gives a unital
*-monomorphism
\[
   \rho:C^*(\mathbb F_\infty)\longrightarrow M
\]
with ultraweakly dense range \cite[Proposition~5.1.1]{Brown06}.  Put
\(\sigma=\tau\circ\rho\).  Its GNS von Neumann algebra is \(M\), so (i)
says that \(\sigma\) is hyperlinear.  The algebra
\(C^*(\mathbb F_\infty)\) has the local lifting property, and therefore
hyperlinearity of \(\sigma\) implies amenability
\cite[Proposition~6.3.4]{Brown06}.  This proves (iii).  The equivalence with
the other approximation properties follows from
\cite[Proposition~4.1.14]{Brown06}.

Condition (iii) implies (iv): take
\(A=\rho(C^*(\mathbb F_\infty))\).  The map \(\rho\) is a
*-isomorphism onto \(A\), so its finite-dimensional approximants transfer to
\(\tau|_A\).  Condition (iv) implies (ii) by taking \(B=A\) and
\(\rho=\id_A\).  Finally, \(\tau|_A\) is faithful for every C*-subalgebra
\(A\subseteq M\), so (iv) is equivalent to (v) by Corollary
\ref{cor:faithful-main}.
\end{proof}

\section{A C*-algebraic Christensen--Sinclair two-cocycle}

This section adapts the cocycle of Christensen and Sinclair
\cite[\S\S2.1--2.3]{CS97} to a faithful tracial C*-algebra.  Their von
Neumann algebra coefficient module requires vanishing on
\(M'\times B(H)\); the module below requires vanishing only on
\(JAJ\times B(H)\).  This distinction is essential because Brown's weak
expectation fixes \(JAJ\), but need not fix all of \(M'\).

Let \(\tau\) be a faithful tracial state on a unital C*-algebra \(A\), and
put
\[
   H=L^2(A,\tau),
   \qquad
   M=\pi_\tau(A)'',
   \qquad
   M'=JMJ.
\]
We suppress \(\pi_\tau\) from the notation, so \(JAJ\subseteq M'\).

For a bilinear map \(F:B(H)\times B(H)\to B(H)\), define
\[
 F^{(n)}(X,Y)_{ij}
   =\sum_{k=1}^nF(x_{ik},y_{kj}),
 \qquad
 X=(x_{ij}),\quad Y=(y_{ij})\in M_n(B(H)),
\]
and set
\[
   \|F\|_{\mathrm{cb}}=\sup_{n\geq1}\|F^{(n)}\|.
\]
Complete boundedness of bilinear maps in this section always refers to this
Christensen--Sinclair convention.

\begin{definition}
A completely bounded bilinear map
\(F:B(H)\times B(H)\to B(H)\) is \emph{admissible} if
\begin{romanenumerate}
\item \(F|_{JAJ\times B(H)}=0\);
\item \(F|_{B(H)\times\C1}=0\);
\item \(F(B(H),K(H))\subseteq K(H)\);
\item for every \(x\in B(H)\), the map \(y\mapsto F(x,y)\) is
ultraweakly continuous.
\end{romanenumerate}
Let \(S_\tau\) be the space of admissible bilinear maps, equipped with the
completely bounded norm.
\end{definition}

\begin{lemma}\label{lem:stau}
The space \(S_\tau\) is a closed subspace of the completely bounded
bilinear maps and is a Banach \(A\)-bimodule under
\[
   (a\cdot F\cdot b)(x,y)=aF(x,y)b.
\]
Moreover,
\[
   \|a\cdot F\cdot b\|_{\mathrm{cb}}
   \leq\|a\|\,\|F\|_{\mathrm{cb}}\,\|b\|.
\]
\end{lemma}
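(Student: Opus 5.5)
I will prove the statement in three parts: closedness, the bimodule structure, and the norm estimate. The closedness step is the only one needing care, and there the delicate condition is (iv), since the other three conditions are pointwise linear conditions.

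\emph{Closedness.} The completely bounded bilinear maps form a Banach space under \(\|\cdot\|_{\mathrm{cb}}\), and \(\|F(x,y)\|\leq\|F\|_{\mathrm{cb}}\|x\|\|y\|\) (take \(n=1\)). So convergence \(F_\alpha\to F\) in cb norm gives pointwise norm convergence \(F_\alpha(x,y)\to F(x,y)\). Conditions (i) and (ii) are vanishing conditions and pass to the limit pointwise. Condition (iii) passes to the limit because \(K(H)\) is norm closed.

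For (iv), fix \(x\). The maps \(y\mapsto F_\alpha(x,y)\) converge to \(y\mapsto F(x,y)\) uniformly on the unit ball of \(B(H)\), with rate at most \(\|F_\alpha-F\|_{\mathrm{cb}}\|x\|\). I will show normality of the limit by composing with an arbitrary \(\omega\in B(H)_*\). Each functional \(y\mapsto\omega(F_\alpha(x,y))\) is ultraweakly continuous, so it lies in \(B(H)_*\). These functionals converge in the norm of \(B(H)^*\), and \(B(H)_*\) is norm closed in \(B(H)^*\), so the limit is normal. A bounded linear map \(B(H)\to B(H)\) all of whose compositions with normal functionals are normal is ultraweakly continuous. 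This gives (iv), and \(S_\tau\) is closed, hence a Banach space.

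\emph{Bimodule structure.} For \(a,b\in A\), identified with \(\pi_\tau(a),\pi_\tau(b)\in B(H)\), the map \((x,y)\mapsto aF(x,y)b\) keeps each condition:
\begin{itemize}
\item (i) and (ii) are preserved because multiplying zero by \(a\) and \(b\) gives zero.
\item (iii) is preserved because \(K(H)\) is an ideal.
\item (iv) is preserved because left and right multiplication by fixed operators are ultraweakly continuous.
\end{itemize}
The module axioms \((a_1a_2)\cdot F=a_1\cdot(a_2\cdot F)\), the commutation of left and right actions, and unitality follow from associativity in \(B(H)\).

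\emph{Norm estimate.} The amplification satisfies
\[
(a\cdot F\cdot b)^{(n)}(X,Y)=(a\otimes 1_n)\,F^{(n)}(X,Y)\,(b\otimes 1_n).
\]
Since \(\|a\otimes1_n\|=\|\pi_\tau(a)\|\leq\|a\|\), we get \(\|(a\cdot F\cdot b)^{(n)}\|\leq\|a\|\,\|F^{(n)}\|\,\|b\|\). Taking the supremum over \(n\) gives the estimate, and in particular \(a\cdot F\cdot b\) is again completely bounded.
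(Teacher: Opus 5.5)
Your proof is correct and follows the paper's argument essentially step by step: you use the same pointwise limits for (i)--(iii), the same multiplication argument for the bimodule structure, and the same amplification identity for the norm estimate. Your treatment of (iv) tests against each \(\omega\in B(H)_*\) and uses that \(B(H)_*\) is norm closed in \(B(H)^*\); this is the functional-by-functional version of the paper's observation that the preadjoints converge in operator norm.
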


\begin{proof}
Conditions (i) and (ii) are closed under norm convergence, and condition
(iii) is closed because \(K(H)\) is norm closed.  Suppose that
\(F_j\to F\) in completely bounded norm.  For fixed \(x\in B(H)\), the
normal maps \(y\mapsto F_j(x,y)\) converge in operator norm to
\(y\mapsto F(x,y)\).  Normal maps between dual Banach spaces form a
norm-closed subspace: their preadjoints converge in operator norm.  Thus the
limit is normal, and condition (iv) is closed as well.

All four conditions are preserved by multiplying the value on the left and
right by elements of \(A\), since \(K(H)\) is a two-sided ideal and
multiplication by a fixed operator is ultraweakly continuous.  Finally,
\[
 (a\cdot F\cdot b)^{(n)}(X,Y)
  =(1_n\otimes a)F^{(n)}(X,Y)(1_n\otimes b),
\]
which proves the norm estimate.
\end{proof}

\begin{lemma}\label{lem:cs-cocycle}
The formula
\[
   \Phi_{\mathrm{cs}}(a,b)(x,y)=[a,x][b,y]
\]
defines a bounded bilinear map \(\Phi_{\mathrm{cs}}:A\times A\to S_\tau\)
such that
\[
   \|\Phi_{\mathrm{cs}}(a,b)\|_{\mathrm{cb}}
   \leq4\|a\|\,\|b\|.
\]
Moreover, \(\Phi_{\mathrm{cs}}\in Z^2(A,S_\tau)\).
\end{lemma}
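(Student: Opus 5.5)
The proof splits into three tasks: show that each \(\Phi_{\mathrm{cs}}(a,b)\) is admissible, establish the cb-norm bound, and verify the cocycle identity.

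\textbf{Admissibility.} Fix \(a,b\in A\) (acting as \(\pi_\tau(a),\pi_\tau(b)\) on \(H\)) and set \(F(x,y)=[a,x][b,y]\). This is bilinear in \((x,y)\). We check the four conditions in order.
\begin{itemize}
\item[(i)] For \(x\in JAJ\subseteq M'\) we have \([a,x]=0\), since \(a\in M\). Hence \(F\) vanishes on \(JAJ\times B(H)\).
\item[(ii)] For \(y=\lambda1\) we have \([b,y]=0\).
\item[(iii)] If \(y\) is compact, then \([b,y]=by-yb\) is compact because \(K(H)\) is an ideal, so \(F(x,y)\in K(H)\).
\item[(iv)] For fixed \(x\), the map \(y\mapsto [a,x](by-yb)\) is a combination of left and right multiplications by fixed operators, so it is ultraweakly continuous.
\end{itemize}

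\textbf{The cb-norm bound.} Write \(F^{(n)}(X,Y)=[1_n\otimes a,X]\,[1_n\otimes b,Y]\), using \(\sum_k[a,x_{ik}][b,y_{kj}]=\bigl([1\otimes a,X][1\otimes b,Y]\bigr)_{ij}\). Then
\[
\|F^{(n)}(X,Y)\|\le 2\|a\|\|X\|\cdot 2\|b\|\|Y\| ,
\]
which gives \(\|\Phi_{\mathrm{cs}}(a,b)\|_{\mathrm{cb}}\le4\|a\|\|b\|\). The same estimate, applied to the bilinear expression, shows that \(\Phi_{\mathrm{cs}}\) is a bounded bilinear map \(A\times A\to S_\tau\).

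\textbf{The cocycle identity.} We must show
\[
a_1\Phi(a_2,a_3)-\Phi(a_1a_2,a_3)+\Phi(a_1,a_2a_3)-\Phi(a_1,a_2)a_3=0
\]
when evaluated at \((x,y)\). The main input is the Leibniz rule \([ab,x]=a[b,x]+[a,x]b\). Expanding, the second term gives
\[
-\Phi(a_1a_2,a_3)(x,y)=-a_1[a_2,x][a_3,y]-[a_1,x]a_2[a_3,y],
\]
and the third gives
\[
\Phi(a_1,a_2a_3)(x,y)=[a_1,x]a_2[a_3,y]+[a_1,x][a_2,y]a_3 .
\]
The first term, \(a_1[a_2,x][a_3,y]\), cancels the first piece of the second term. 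The two copies of \([a_1,x]a_2[a_3,y]\) cancel each other. Finally, \([a_1,x][a_2,y]a_3\) cancels the fourth term \(-\Phi(a_1,a_2)a_3\). The sum is therefore zero.

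The only point needing care is the module convention. We use \((a\cdot F\cdot b)(x,y)=aF(x,y)b\) together with the coboundary formula \(\Delta^2\) from the preliminaries; with these, the signs above match. There is no real obstacle: the only nontrivial admissibility input is \(A\subseteq M\) commuting with \(JAJ\subseteq M'\).
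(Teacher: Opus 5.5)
Your proposal is correct and follows the paper's proof essentially step for step. The paper also checks the four admissibility conditions directly, and it gets the bound from the factorization \(\Phi_{\mathrm{cs}}(a,b)^{(n)}(X,Y)=d_a^{(n)}(X)\,d_b^{(n)}(Y)\) with \(d_a(x)=[a,x]\) and \(\|d_a\|_{\mathrm{cb}}\leq 2\|a\|\), which is your \([1_n\otimes a,X]\); it then verifies \(\Delta^2\Phi_{\mathrm{cs}}=0\) by the same Leibniz-rule cancellation.
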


\begin{proof}
For \(a\in A\), let \(d_a(x)=[a,x]\).  Then
\(\|d_a\|_{\mathrm{cb}}\leq2\|a\|\), and
\[
   \Phi_{\mathrm{cs}}(a,b)^{(n)}(X,Y)
   =d_a^{(n)}(X)d_b^{(n)}(Y).
\]
This proves the stated completely bounded norm estimate.

If \(x\in JAJ\), then \(x\) commutes with \(A\), so the value is zero.
The value is also zero when \(y\in\C1\).  If \(y\in K(H)\), then
\([b,y]\in K(H)\), and hence \([a,x][b,y]\in K(H)\).  Finally,
\(y\mapsto[a,x][b,y]\) is ultraweakly continuous.  Thus the value lies in
\(S_\tau\).

For \(a,b,c\in A\), use
\[
   [ab,x]=a[b,x]+[a,x]b,
   \qquad
   [bc,y]=b[c,y]+[b,y]c.
\]
Then
\begin{align*}
 (\Delta^2\Phi_{\mathrm{cs}})(a,b,c)(x,y)
 &=a[b,x][c,y]-[ab,x][c,y]\\
 &\quad+[a,x][bc,y]-[a,x][b,y]c\\
 &=0.
\end{align*}
Hence \(\Phi_{\mathrm{cs}}\) is a continuous Hochschild two-cocycle.
\end{proof}

\begin{proposition}\label{prop:cs}
If \(\tau\) is amenable, then
\[
   \Phi_{\mathrm{cs}}\in B^2(A,S_\tau).
\]
More precisely, it is the coboundary of a one-cochain
\(\varphi:A\to S_\tau\) satisfying \(\|\varphi(a)\|_{\mathrm{cb}}\leq
4\|a\|\).
\end{proposition}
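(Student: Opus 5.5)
The plan is to mimic the Christensen--Sinclair construction, with the weak expectation moved to the commutant so that it fixes \(JAJ\). The cochain will be
\[
   \varphi(a)(x,y)=\bigl(x-v(x)\bigr)[a,y],
\]
where \(v\) is a u.c.p. map from \(B(H)\) into \(M'\) that fixes \(JAJ\).

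\textbf{Step 1: the map \(v\).} Since \(\tau\) is faithful, \(\rho=\pi_\tau\) is a faithful representation on \(K=H\). Theorem \ref{thm:brown}(iii) gives a u.c.p. map \(u:B(H)\to M\) with \(u(a)=a\) for \(a\in A\). Set
\[
   v(T)=J\,u(JTJ)\,J .
\]
Conjugation by the antiunitary involution \(J\) is a conjugate-linear \(*\)-automorphism of \(B(H)\) that preserves positivity. Applying it twice yields a linear u.c.p. map \(v:B(H)\to JMJ=M'\). It satisfies \(v(JaJ)=Ju(a)J=JaJ\), so \(v\) fixes \(JAJ\) pointwise. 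The point to record is that \(v(x)\) lies in \(M'\), hence commutes with every \(a\in A\).

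\textbf{Step 2: admissibility and the norm bound.} Write \(w_x=x-v(x)\) and \(d_a(y)=[a,y]\). Then
\[
   \varphi(a)^{(n)}(X,Y)=\bigl(X-v^{(n)}(X)\bigr)\,d_a^{(n)}(Y).
\]
Since \(\|\id-v\|_{\mathrm{cb}}\le2\) and \(\|d_a\|_{\mathrm{cb}}\le2\|a\|\), this gives \(\|\varphi(a)\|_{\mathrm{cb}}\le4\|a\|\). The map \(a\mapsto\varphi(a)\) is linear, so it is a bounded one-cochain. The four admissibility conditions are checked as follows.
\begin{itemize}
\item[(i)] If \(x\in JAJ\), then \(v(x)=x\), so \(w_x=0\).
\item[(ii)] If \(y\in\C1\), then \([a,y]=0\).
\item[(iii)] If \(y\) is compact, then \([a,y]\) is compact, and \(K(H)\) is an ideal.
\item[(iv)] The map \(y\mapsto w_x[a,y]\) is ultraweakly continuous, because it is built from multiplications by fixed operators.
\end{itemize}

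\textbf{Step 3: the coboundary identity.} Using \([ab,y]=a[b,y]+[a,y]b\),
\begin{align*}
 (\Delta^1\varphi)(a,b)(x,y)
 &=a\,w_x[b,y]-w_x[ab,y]+w_x[a,y]b\\
 &=[a,w_x][b,y].
\end{align*}
Because \(v(x)\in M'\) commutes with \(a\), we have \([a,w_x]=[a,x]\). Hence \(\Delta^1\varphi=\Phi_{\mathrm{cs}}\).

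\textbf{Main obstacle.} Steps 2 and 3 are routine once \(v\) is available. The real work is Step 1: producing a weak expectation onto the commutant \(M'\) rather than onto \(M\). This is exactly why condition (i) is stated for \(JAJ\) and not for all of \(M'\). The points to verify carefully are that conjugation by \(J\) preserves complete positivity and unitality, and that it exchanges \(M\) with \(M'\) through the identity \(JMJ=M'\) recalled in Lemma \ref{lem:testmodule}.
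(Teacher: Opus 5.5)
Your proof is correct. Steps 2 and 3 are exactly the paper's verification: the same cochain \((x-u(x))[a,y]\), the same factorization \(\varphi(a)^{(n)}(X,Y)=T^{(n)}(X)\,d_a^{(n)}(Y)\) giving the bound \(4\|a\|\), and the same computation \((\Delta^1\varphi)(a,b)(x,y)=[a,z_x][b,y]=[a,x][b,y]\).

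The two proofs differ only in how they produce the u.c.p. map \(B(H)\to M'\) fixing \(JAJ\).

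\emph{The paper's route.} It passes to the opposite algebra. It shows that \(\tau^{\mathrm{op}}\) is amenable using the transposed approximants \(\theta_i(a)^{\mathsf t}\). It then uses \(U_0\) to identify \(\pi_{\tau^{\mathrm{op}}}\) with the right representation \(a^{\mathrm{op}}\mapsto Ja^*J\), which is faithful with von Neumann closure \(M'\). Finally it applies Theorem \ref{thm:brown} to \((A^{\mathrm{op}},\tau^{\mathrm{op}})\).

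\emph{Your route.} You apply Theorem \ref{thm:brown} only once, to \((A,\tau)\) with \(\rho=\pi_\tau\). You then transport the resulting map \(u:B(H)\to M\) through the antilinear symmetry \(T\mapsto JTJ\), using \(JMJ=M'\). This is shorter, and it sidesteps two small checks that the paper's route requires:
\begin{enumerate}
\item that \(\theta_i^{\mathrm{op}}\) is completely positive on \(A^{\mathrm{op}}\), which rests on the identification \(M_n^{\mathrm{op}}\cong M_n\), since the transpose itself is not completely positive on \(M_n\);
\item the unitary identification of the opposite GNS representation with the right representation.
\end{enumerate}
In return, the paper's route works only with linear maps, and it records along the way the independent fact that amenability passes to \((A^{\mathrm{op}},\tau^{\mathrm{op}})\).

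The one point your version must supply, which you rightly flag, is complete positivity of \(v\). At matrix level,
\[
   v^{(n)}(X)=J_n\,u^{(n)}(J_nXJ_n)\,J_n,
   \qquad J_n=J^{\oplus n},
\]
where \(J_n\) is antiunitary on \(H^n\). Conjugation by an antiunitary preserves positivity in \(B(H^n)\), so \(v\) is completely positive and not merely positive.
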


\begin{proof}
Let \(A^{\mathrm{op}}\) be the opposite algebra and define
\(\tau^{\mathrm{op}}(a^{\mathrm{op}})=\tau(a)\).  Amenability passes to the
opposite algebra.  Indeed, if \(\theta_i:A\to M_{n(i)}\) witnesses
amenability of \(\tau\), then
\[
   \theta_i^{\mathrm{op}}(a^{\mathrm{op}})=\theta_i(a)^{\mathsf t}
\]
witnesses amenability of \(\tau^{\mathrm{op}}\), using the canonical
*-isomorphism \(M_{n(i)}^{\mathrm{op}}\cong M_{n(i)}\).  For example,
\begin{align*}
 &\|\theta_i^{\mathrm{op}}(a^{\mathrm{op}}b^{\mathrm{op}})
     -\theta_i^{\mathrm{op}}(a^{\mathrm{op}})
      \theta_i^{\mathrm{op}}(b^{\mathrm{op}})\|_2\\
 &\qquad=\|\theta_i(ba)-\theta_i(b)\theta_i(a)\|_2\longrightarrow0.
\end{align*}

The unitary
\[
   U_0:H_{\tau^{\mathrm{op}}}\to H,
   \qquad
   U_0\widehat{a^{\mathrm{op}}}=\widehat a,
\]
identifies the opposite GNS representation with the right representation:
\[
   U_0\pi_{\tau^{\mathrm{op}}}(a^{\mathrm{op}})U_0^*=Ja^*J.
\]
Its von Neumann closure is \((JAJ)''=M'\).  Since \(\tau\) is faithful,
this right representation is faithful.  Theorem \ref{thm:brown}, applied to
\((A^{\mathrm{op}},\tau^{\mathrm{op}})\), therefore gives a u.c.p. map
\[
   u:B(H)\to M'
\]
which fixes \(JAJ\).  The multiplicative-domain theorem makes \(u\)
\(JAJ\)-bimodular.

Define
\[
   \varphi(a)(x,y)=(x-u(x))[a,y].
\]
Let \(T=\id-u\).  Since \(u\) is u.c.p.,
\[
   \|T\|_{\mathrm{cb}}\leq2,
   \qquad
   \|d_a\|_{\mathrm{cb}}\leq2\|a\|,
\]
and, at matrix level,
\[
   \varphi(a)^{(n)}(X,Y)=T^{(n)}(X)d_a^{(n)}(Y).
\]
Thus \(\|\varphi(a)\|_{\mathrm{cb}}\leq4\|a\|\), so
\(a\mapsto\varphi(a)\) is bounded and linear.

If \(x\in JAJ\), then \(u(x)=x\).  If \(y\in\C1\), then
\([a,y]=0\).  If \(y\in K(H)\), then \([a,y]\in K(H)\), so
\((x-u(x))[a,y]\in K(H)\).  Finally,
\(y\mapsto(x-u(x))[a,y]\) is ultraweakly continuous.  Hence
\(\varphi(a)\in S_\tau\).

Put \(z_x=x-u(x)\).  Since \(u(x)\in M'\), it commutes with \(A\).
Using \([ab,y]=a[b,y]+[a,y]b\), we get
\begin{align*}
 (\Delta^1\varphi)(a,b)(x,y)
 &=az_x[b,y]-z_x[ab,y]+z_x[a,y]b\\
 &=az_x[b,y]-z_xa[b,y]\\
 &=[a,z_x][b,y]\\
 &=[a,x][b,y]\\
 &=\Phi_{\mathrm{cs}}(a,b)(x,y).
\end{align*}
Therefore \(\Phi_{\mathrm{cs}}=\Delta^1\varphi\).
\end{proof}

\begin{remark}\label{rem:cs-converse}
No converse is asserted here.  The von Neumann algebra module in
\cite[\S2]{CS97} requires vanishing on \(M'\times B(H)\), whereas the
present, generally larger, module requires vanishing only on
\(JAJ\times B(H)\).  Moreover, the converse proof in
\cite[Theorem~2.3]{CS97} uses the fact that, on the continuous von Neumann
summand, the relevant compact-valued derivations have unique compact
implementers with uniform norm estimates.

For the present cyclic C*-representation, Christensen's theorem guarantees
only an implementer in \(B(H)\).  If \(D:A\to K(H)\) is written as
\(D(a)=[T,a]\), Corollary \ref{cor:calkin} identifies the obstruction to a
compact implementer with the coset of
\[
   q(T)\in q(A)'\cap\mathcal Q(H)
\]
modulo \(q(A')\).  Nothing in the present hypotheses forces this coset to
vanish.  Even if all pointwise obstructions vanish, a converse would still
require a simultaneous linear, completely bounded choice of implementers.
Thus the Christensen--Sinclair converse does not presently transfer to this
C*-algebraic coefficient module.
\end{remark}

\section*{Acknowledgements}
The author gratefully acknowledges the support and hospitality of the Fields Institute for Research in Mathematical Sciences, where he was a postdoctoral fellow for six months during the Operator Algebras Thematic Program. He also thanks the Department of Mathematics and Statistics at the University of Ottawa for its support and hospitality during his postdoctoral fellowship.

During the preparation of this manuscript, GPT-5.5 Pro was used to assist with
drafting and editorial refinement.  The author independently checked,
substantially revised, and approved all content, and takes full responsibility
for the final version.

\end{document}